\newcommand{\Og}{\Omega}
\newcommand{\Dt}{\Delta}
\newcommand{\be}{\begin{equation}}
\newcommand{\ee}{\end{equation}}
\newcommand{\ba}{\begin{array}}
\newcommand{\ea}{\end{array}}
\newcommand{\bea}{\begin{eqnarray}}
\newcommand{\eea}{\end{eqnarray}}
\newcommand{\beas}{\begin{eqnarray*}}
\newcommand{\eeas}{\end{eqnarray*}}
\newcommand{\cc}{\color{blue}}
\crefname{hypothesis}{Hypothesis}{Hypotheses}
\title{Numerical Analysis of a Corrected Smagorinsky Model \thanks{
The research was partially supported by NSF grant DMS-2110379.}}
\author{Farjana Siddiqua\thanks{Department of Mathematics, University of Pittsburgh, Pittsburgh, PA-15260
  (fas41@pitt.edu ).}
\and Xihui Xie\thanks{Department of Mathematics, University of Pittsburgh, Pittsburgh, PA-15260 
  (xix55@pitt.edu).}
}
\begin{document}
\nolinenumbers
\maketitle
\begin{abstract}
  The classical Smagorinsky model's solution is an approximation to a (resolved) mean velocity. Since it is an eddy viscosity model, it cannot represent a flow of energy from unresolved fluctuations to the (resolved) mean velocity. This model has recently been corrected to incorporate this flow and still be well-posed. Herein we first develop some basic properties of the corrected model. Next, we perform a complete numerical analysis of two algorithms for its approximation. They are tested and proven to be effective.
\end{abstract}

\begin{keywords}
  Eddy Viscosity, Corrected Smagorinsky, Complex turbulence, Backscatter
\end{keywords}

\begin{AMS}
   65M06, 65M12, 65M22, 65M60, 76M10
\end{AMS}

\section{Introduction}
Consider the Smagorinksy model \cite{Smagorinsky}\footnote{The mechanically correct formulation is with the $\grad^s w$ instead of $\grad w$ in the term $-\div\left((C_s\delta)^2|\grad w|\grad w\right)$  where $\grad^s$ is the symmetric part of the gradient tensor. But since the estimates are same and analyses are simpler with $\grad w$ due to Korn's inequality $\| v\|_{H^1(\Omega)}^2\leq 
    C[\|v\|_{L^2(\Omega)}^2+\|\grad^s v\|_{L^2(\Omega)}^2]$, we use $\grad w$ throughout the paper.},  with prescribed body force $f$, kinematic viscosity $\nu$ in the regular and bounded flow domain $\Omega\subset  \mathbb{R}^d \ (d=2,\ 3)$, which was later advanced independently by Ladyzhenskaya \cite{ladyzhenskaya1969mathematical,ladyzhenskaya1970modification}: 
$\grad\cdot w=0$ and
\begin{equation}\label{smag}
     w_t+ w\cdot\grad w-\nu\Dt w+\grad q-\div\left((C_s\delta)^2|\grad w|\grad w\right)=f( x).
\end{equation}
Here $( w,q)$ 
approximate an ensemble average of Navier-Stokes solutions, $(\overline{ u},\overline{p})$. 
This is an eddy viscosity model with turbulent viscosity, $\nu_T=(C_s\delta)^2|\grad w|$, where $C_s\approx 0.1$, 
Lilly \cite{lilly}, $\delta$ is a length scale (or grid scale). Like all eddy viscosity models, the Smagorinsky model represents a flow of energy from means to unresolved fluctuations ($ u'= u-\overline{ u}$, for a precise formula see \cref{def:2.14}) and has errors by not representing any intermittent energy flow from fluctuations back to means. 
Corrections have recently been made representing this flow in  Jiang and Layton \cite{jiang2016ev} and Rong, Layton and Zhao \cite{rong2019extension}.
Following their ideas, we develop a corrected model in \cref{sec:3}. We also analyze and test numerical algorithms for effective approximation of the resulting corrected model: $\div w=0$ and
\begin{equation}\label{CSM0}
 w_t-C_s^4\delta^2\mu^{-2}\Dt w_t+ w\cdot\grad w-\nu\Dt w+\grad q-\div\Big((C_s\delta)^2|\grad w|\grad w\Big)=f( x).
\end{equation}
Here $\mu$ is a constant from Kolmogorov-Prandtl relation  \cite{kolmogrov,prandtl}.

The main result of this paper is the complete numerical analysis and computational testing of effective algorithms for this 
model. 
This paper gives detailed 
numerical analyses in \cref{sec:4} and \cref{sec:5}.
This model is able to capture the phenomenon of transferring energy from fluctuation to means,
 which is tested numerically in \cref{sec:6.2}. 
 There were few attempts made for extending model that represents flow at statistical equilibrium to non-equilibrium. For instance, in a previous work by Jiang and Layton \cite{jiang2016ev}, there was an extra fitting parameter $\beta$ in the second term of \eqref{CSM0} which is needlessly complicated. In our paper, a different idea results in a simpler model with no new fitting parameters other than from the Smagorinsky model \eqref{smag}.



\subsection{Previous work}
For simulating turbulent flow, there are different approaches,
see \cite{turbulence,turbulence1,turbulence2,turbulence3,turbulence4,pakzad2017damping}. 
A summary of some recent work in eddy viscosity models of turbulence is presented in \cite{layton2020EV}.
One of the recent approaches is by adding a term of Kelvin-Voigt form to the equations for the mean-field  \cite{amrouche2020turbulent}. Smagorinsky model is a classical model. It's positive and negative features are well understood. There has been lot of work correcting negative features, for example Tommy K. Kim \cite{kim2001modified} did a different modification than ours which corrects near wall behavior. The new term in our model has similarity to the Voigt term used in Voigt/Kelvin-Voigt/Kelvin Model \cite{stieger2002rheology} for viscoelastic fluids. There has been lot of recent works on Voigt Model, see for example \cite{larios2018computational,larios2014higher,kuberry2012numerical,baranovskii2020strong}.
Recently, Rong, Layton and Zhao \cite{rong2019extension} and Berselli, Lewandowski and Nguyen \cite{rotation} all studied the extension of the Baldwin \& Lomax model \cite{baldwin1978thin} to non-equilibrium ($\frac{d}{dt}\overline{\| u'\|^2}\neq 0$, for a precise definition see \cref{equilibirum}) 
problems.
A variant of the Smagorinsky model and detailed analysis is presented in the paper \cite{ChorfiAbdelwahedBerselli+2020+1402+1419}.
Jiang and Layton \cite{jiang2016ev} derived a corrected eddy viscosity model for flow not at statistical equilibrium state.

\section{Notation and Preliminaries}

In this section, we introduce some of the notations and results used in this paper.
We denote by $\|\cdot\|$ and $(\cdot,\cdot)$ the $L^2(\Omega)$ norm and inner product, respectively. We denote the $L^p(\Omega)$ norm by $\|\cdot\|_{L^p}$.
The solution spaces $X$ for the velocity and $Q$ for the pressure are defined as:
\begin{equation*}
\begin{aligned}
    &X:=\{ v\in L^3(\Omega): \grad  v\in L^3(\Omega)\ \text{and}\  v=0\ \text{on}\ \partial\Omega\},\\
    &Q:=L^2_0(\Omega)=\{q\in L^2(\Omega): \int_\Omega q\ d x=0\},\\
    \text{and}\quad &V:=\{ v\in X: (q,\grad\cdot  v)=0,\ \forall q\in Q\}.
\end{aligned}
\end{equation*}
The space $H^{-1}(\Omega)$ denotes the dual space of bounded linear functionals defined on $H_0^1(\Omega)=\{ v\in H^1(\Omega):  v=0\ \text{on}\  \partial\Omega\}$ and this space is equipped with the norm:
\begin{equation*}
    \|f\|_{-1}=\sup_{0\neq  v\in X}\frac{(f, v)}{\|\grad  v\|}.
\end{equation*}
The finite element method for this problem involves picking finite element spaces \cite{cfdbook} $X^h\subset X$ and $Q^h\subset Q$. We assume that $(X^h, Q^h)$ satisfies the discrete inf-sup condition:
\begin{equation*}
\inf_{\lambda^h\in Q^h}\sup_{ v^h\in X^h}\frac{(\lambda^h,\grad\cdot  v^h)}{\|\lambda^h\|\|\grad  v^h\|}\geq \beta^h>0,
\end{equation*}
where $\beta^h$ is bounded away from zero uniformly in $h$.

\begin{definition}\label{trilinear0}
(Trilinear Form)
Define the skew symmetrized trilinear form $b^*:X\times{X}\times{X}\rightarrow \mathbb{R}$ as follows 
\begin{align*}
b^*( u, v, w):=\frac{1}{2}( u\cdot \grad  v, w)-\frac{1}{2}( u\cdot \grad  w, v).\\
\end{align*}
\end{definition}
\begin{lemma} \label{trilinear1} ($p.114$, Girault and Raviart \cite{GandR})
For any $u\in V$ and $v,w\in X$,
\begin{align*}
b^*( u, v, w)=( u\cdot \grad  v, w),\
\text{and}\ b^*( u, v, v)=0,\quad \forall\ u,\ v\in X.
\end{align*}
\end{lemma}

\begin{lemma}\label{trilinear ineq}
For any $ u,\  v,\  w\in X$,
\begin{equation*}
\begin{aligned}
    &\left|\int_\Omega  u\cdot\grad  v\cdot  w\ d x\right|\leq C\|\grad  u\|\|\grad  v\|\|\grad  w\|,\\&
    \left|\int_\Omega  u\cdot\grad  v\cdot  w\ d x\right|\leq C\| u\|^{1/2}\|\grad  u\|^{1/2}\|\grad  v\|\|\grad  w\|.
    \end{aligned}
\end{equation*}
\end{lemma}

\begin{lemma}(Polarization identity)
\begin{equation}\label{polar}
    ( u, v)=\frac{1}{2}\| u\|^2+\frac{1}{2}\| v\|^2-\frac{1}{2}\| u- v\|^2.
\end{equation}
\end{lemma}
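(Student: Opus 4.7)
The identity is a standard algebraic manipulation in an inner product space, so the plan is essentially a one-line expansion. My approach would be to start from the right-hand side and unfold the squared norm $\|\bu-\bv\|^2$ using bilinearity and symmetry of the $L^2$ inner product.

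Concretely, first I would write $\|\bu-\bv\|^2=(\bu-\bv,\bu-\bv)$ and expand this via the bilinearity of $(\cdot,\cdot)$ to obtain $\|\bu\|^2-2(\bu,\bv)+\|\bv\|^2$, using the symmetry $(\bu,\bv)=(\bv,\bu)$ to combine the cross terms. Substituting this expression into $\tfrac{1}{2}\|\bu\|^2+\tfrac{1}{2}\|\bv\|^2-\tfrac{1}{2}\|\bu-\bv\|^2$ produces
\[
\tfrac{1}{2}\|\bu\|^2+\tfrac{1}{2}\|\bv\|^2-\tfrac{1}{2}\bigl(\|\bu\|^2-2(\bu,\bv)+\|\bv\|^2\bigr)=(\bu,\bv),
\]
after the $\tfrac{1}{2}\|\bu\|^2$ and $\tfrac{1}{2}\|\bv\|^2$ terms cancel in pairs. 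This completes the identity.

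There is no real obstacle here: the statement is a purely algebraic consequence of the inner product axioms and does not even require any of the specific structure of $L^2(\Omega)$ beyond it being a real inner product space. The only thing to be mindful of is that $(\cdot,\cdot)$ in the paper denotes the real $L^2(\Omega)$ inner product, so symmetry (rather than conjugate symmetry) is used without any complex-conjugation subtlety.
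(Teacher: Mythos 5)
Your expansion of $\|\bu-\bv\|^2=(\bu-\bv,\bu-\bv)=\|\bu\|^2-2(\bu,\bv)+\|\bv\|^2$ and substitution is correct and is the standard one-line argument; the paper states this lemma without any proof, so there is nothing to compare against.
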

\begin{lemma}(The Poincar\'{e}-Friedrichs' inequality)
There is a positive constant $C_{PF}=C_{PF}(\Omega)$ such that
\begin{equation}\label{pf ineq}
    \| u\|\leq C_{PF}\|\grad u\|,\;\;\forall u\in X.
\end{equation}
\end{lemma}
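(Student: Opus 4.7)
The plan is to reduce the inequality to the classical Poincar\'{e}--Friedrichs inequality on $H^1_0(\Omega)$ and then establish the classical result by a direct one-dimensional integration argument.

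First I would verify that $X\subset H^1_0(\Omega)$ continuously. Because $\Omega$ is bounded, H\"{o}lder's inequality yields the embedding $L^3(\Omega)\hookrightarrow L^2(\Omega)$, so every $\bu\in X$ has $\bu,\grad\bu\in L^2(\Omega)$; combined with the vanishing boundary trace this places $\bu$ in $H^1_0(\Omega)$, with $\|\bu\|_{H^1}$ controlled by $\|\bu\|_{W^{1,3}}$ up to a constant depending only on $|\Omega|$. Hence it suffices to prove \eqref{pf ineq} for arbitrary $\bu\in H^1_0(\Omega)$ with a constant $C_{PF}$ depending only on $\Omega$.

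For the reduced statement I would invoke density of $C_c^\infty(\Omega)$ in $H^1_0(\Omega)$ and work on $\bu\in C_c^\infty(\Omega)$, extended by zero outside $\Omega$. Since $\Omega$ is bounded, enclose it in a slab $\Omega\subset\{\bx:-M\leq x_1\leq M\}$. The fundamental theorem of calculus gives
\begin{equation*}
\bu(x_1,x')=\int_{-M}^{x_1}\partial_{x_1}\bu(s,x')\,ds,
\end{equation*}
and the Cauchy--Schwarz inequality followed by integration over $\Omega$ delivers $\|\bu\|^2\leq 4M^2\|\partial_{x_1}\bu\|^2\leq 4M^2\|\grad\bu\|^2$, so that $C_{PF}=2M$ is admissible. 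Passing to the limit via density concludes the argument on $H^1_0(\Omega)$, and then the embedding of Step 1 transfers the estimate to all of $X$.

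The \emph{main obstacle} is really just the reduction step: one must confirm that the $L^3$-based setting of $X$, with the boundary condition interpreted as a $W^{1,3}$ trace, is compatible with $H^1_0(\Omega)$. For a regular bounded domain this is immediate because $W^{1,3}_0(\Omega)\hookrightarrow H^1_0(\Omega)$ continuously by H\"{o}lder's inequality, so no delicate trace theory or compactness argument is required. I would avoid any abstract compactness (Rellich) or eigenvalue characterization and stay with the constructive slab estimate, because it yields an explicit $C_{PF}$ in terms of the geometry of $\Omega$, which is useful later in the numerical analysis.
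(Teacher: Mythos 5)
Your argument is correct. Note, however, that the paper does not prove this lemma at all: it is recorded in the preliminaries as a classical fact, so there is no proof to compare against. What you supply is the standard constructive proof --- reduce from $X$ (essentially $W^{1,3}_0(\Omega)$) to $H^1_0(\Omega)$ via the H\"{o}lder embedding $L^3(\Omega)\hookrightarrow L^2(\Omega)$ on the bounded domain, then run the one-dimensional slab estimate on $C_c^\infty(\Omega)$ and pass to the limit by density --- and every step is sound, including the explicit constant $C_{PF}=2M$ from the Cauchy--Schwarz computation. The only thing worth flagging is that the reduction step is where the statement could conceivably go wrong for an exotic function space, and you correctly identify and dispatch it: since $\Omega$ is bounded, membership in $X$ already forces $\bu\in H^1_0(\Omega)$, so the classical inequality applies verbatim. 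Your insistence on the constructive slab argument over a Rellich-compactness contradiction is also consistent with how the constant $C_{PF}$ is used quantitatively later in the paper's stability estimates (e.g.\ in the choice $\alpha=\min\{\nu/(2C_{PF}^2),\,\mu^2\nu/(C_s^4\delta^2)\}$), where an explicit, geometry-dependent constant is preferable to a purely existential one.
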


Next is a Discrete Gronwall lemma see Lemma 5.1 p.369 \cite{heywood1990}.
\begin{lemma}\label{gronwall}
Let $\Delta t,\ B, \ a_n,\  b_n, c_n, \ d_n $ for integers $n\geq 0$ be nonnegative numbers such that for $l\geq 1$, if
\begin{equation*}
a_l+\Delta t\sum_{n=0}^lb_n\leq \Delta t \sum_{n=0}^{l-1}d_na_n+\Delta t\sum_{n=0}^lc_n+B,\ \text{for} \;l\geq 0,
\end{equation*}
then for all $\Delta t>0$,
\begin{equation*}
a_l+\Delta t\sum_{n=0}^lb_n\leq \exp(\Delta t\sum_{n=0}^{l-1}d_n)\Big(\Delta t\sum_{n=0}^lc_n+B\Big),\ \text{for}\; l\geq 0.
\end{equation*}
\end{lemma}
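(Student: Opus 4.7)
The strategy is to fold the $b_n$ summands into a single sequence $A_l$, reduce the assumed coupled inequality to a one-step linear recurrence in an auxiliary sequence dominating $A_l$, and then solve that recurrence by iterating the elementary bound $1+x \leq \exp(x)$ for $x \geq 0$.

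Set $A_l := a_l + \Delta t \sum_{n=0}^l b_n$ and $\phi_l := \Delta t \sum_{n=0}^l c_n + B$. Nonnegativity of $b_n$ gives $a_n \leq A_n$, so the hypothesis implies $A_l \leq \Delta t \sum_{n=0}^{l-1} d_n A_n + \phi_l$. Introducing $Y_l := \Delta t \sum_{n=0}^{l-1} d_n A_n + \phi_l$, one has $A_l \leq Y_l$, and a direct computation gives the increment $Y_{l+1} - Y_l = \Delta t\, d_l A_l + (\phi_{l+1} - \phi_l)$. Substituting $A_l \leq Y_l$ and using $1 + \Delta t\, d_l \leq \exp(\Delta t\, d_l)$ produces the recurrence $Y_{l+1} \leq \exp(\Delta t\, d_l)\, Y_l + (\phi_{l+1} - \phi_l)$.

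I would then prove by induction on $l$ that $Y_l \leq \exp\bigl(\Delta t \sum_{n=0}^{l-1} d_n\bigr) \phi_l$. The base case $l=0$ is immediate because $Y_0 = \phi_0$. For the inductive step, substituting the hypothesis into the recurrence yields $Y_{l+1} \leq \exp\bigl(\Delta t \sum_{n=0}^{l} d_n\bigr) \phi_l + (\phi_{l+1} - \phi_l)$, and the inhomogeneous term is absorbed using $\phi_{l+1} - \phi_l \geq 0$ (from $c_n \geq 0$) together with $\exp(x) \geq 1$, producing $Y_{l+1} \leq \exp\bigl(\Delta t \sum_{n=0}^{l} d_n\bigr) \phi_{l+1}$. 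The desired bound on $A_l$ then follows from $A_l \leq Y_l$.

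The argument is entirely elementary; the only real obstacle is clerical bookkeeping, namely keeping the index ranges $\sum_{n=0}^{l-1}$ versus $\sum_{n=0}^{l}$ aligned across the telescoping, and verifying the edge case $l=0$ where the sum over $d_n$ is empty so that both hypothesis and conclusion degenerate to $A_0 \leq \phi_0$. Once the recurrence is set up, the exponential bound is essentially forced by a single application of $1+x \leq \exp(x)$ at each step.
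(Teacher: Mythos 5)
Your proof is correct. Note, however, that the paper does not prove this lemma at all: it simply cites Lemma 5.1, p.~369 of Heywood and Rannacher, so there is no in-paper argument to compare against. Your argument is the standard one for the unconditional form of the discrete Gronwall lemma (the form in which the $d_n a_n$ sum stops at $l-1$, so no smallness restriction on $\Delta t\, d_n$ is needed): absorb $a_n$ into $A_n = a_n + \Delta t\sum_{m=0}^n b_m$ using $b_n\ge 0$ and $d_n\ge 0$, majorize by the auxiliary sequence $Y_l$, and iterate $1+x\le e^x$. The induction is clean, the base case $Y_0=\phi_0$ is handled, and the absorption of the inhomogeneous increment $\phi_{l+1}-\phi_l\ge 0$ via $e^x\ge 1$ is exactly the right move. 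Two small points you use implicitly and could state: replacing $a_n$ by $A_n$ inside $\sum d_n a_n$ preserves the inequality only because $d_n\ge 0$, and the step $(1+\Delta t\,d_l)Y_l\le e^{\Delta t\,d_l}Y_l$ needs $Y_l\ge 0$, which holds since all the data are nonnegative. Neither is a gap; the proof is complete and matches the classical reference's approach.
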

In this paper, we will need this following well-known lemma, see, e.g., \cite{Layton2002error,layton1996,lady1991}
\begin{lemma}\label{monotonicity and LLC}
({\bf Strong Monotonicity (SM) and Local Lipschitz Continuity (LLC)})\\There exists $C_1, \ C_2>0$ such that for all $ u,\  v,\  w\in L^3(\Omega),\ \grad  u,\ \grad  v,\ \grad  w\in L^3(\Omega)$
\begin{align}
    \textbf{(SM)}&\quad  (|\grad  u|\grad  u-|\grad  w|\grad  w,\grad( u- w))\geq C_1\|\grad( u- w)\|_{L^3(\Omega)}^3,\label{strong-monotonicity} \\ 
    \textbf{(LLC)}&\quad  (|\grad  u|\grad  u-|\grad  w|\grad  w,\grad  v)\leq C_2 r \|\grad( u- w)\|_{L^3(\Omega)}\|\grad  v\|_{L^3(\Omega)},\label{LLC}
\end{align}
where $r=\max\{\|\grad  u\|_{L^3(\Omega)},\ \|\grad  w\|_{L^3(\Omega)}\}$.
\end{lemma}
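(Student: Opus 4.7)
\smallskip

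The plan is to reduce both inequalities to pointwise algebraic estimates for the matrix-valued map $\Phi(A):=|A|A$, and then integrate using H\"older's inequality with the exponents $(3,3,3)$. Concretely, I would establish two pointwise inequalities valid for any matrices $A,B$ of the same size:
\begin{align*}
(\text{PSM})\quad & (|A|A-|B|B):(A-B)\ \geq\ c_1\,|A-B|^3,\\
(\text{PLLC})\quad & \bigl|\,|A|A-|B|B\,\bigr|\ \leq\ c_2\,(|A|+|B|)\,|A-B|,
\end{align*}
and then apply them pointwise with $A=\grad\bu(\bx)$, $B=\grad\bw(\bx)$, before integrating over $\Omega$.

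For (PSM), the clean way is to observe that $\Phi(A)=\nabla_A F(A)$ where $F(A)=\tfrac{1}{3}|A|^3$ is strictly convex with $3$-growth; strong monotonicity of $\Phi$ with the cubic lower bound is a standard $p$-Laplacian computation ($p=3$), which can be carried out by parameterizing $A_t=B+t(A-B)$ and bounding $\int_0^1 \tfrac{d}{dt}\Phi(A_t)\,dt\cdot (A-B)$ from below. Integrating (PSM) over $\Omega$ yields (SM) with $C_1=c_1$.

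For (PLLC) one writes $|A|A-|B|B = |A|(A-B)+(|A|-|B|)B$ and uses the scalar inequality $\bigl|\,|A|-|B|\,\bigr|\le |A-B|$ to produce the factor $(|A|+|B|)$. Multiplying by $|\grad\bv|$, integrating over $\Omega$, and applying the triple H\"older inequality from \cref{trilinear ineq} with $p=q=r=3$ yields
\begin{equation*}
(|\grad\bu|\grad\bu-|\grad\bw|\grad\bw,\grad\bv)\ \leq\ c_2\,\bigl\||\grad\bu|+|\grad\bw|\bigr\|_{L^3}\,\|\grad(\bu-\bw)\|_{L^3}\,\|\grad\bv\|_{L^3}.
\end{equation*}
Finally, $\bigl\||\grad\bu|+|\grad\bw|\bigr\|_{L^3}\le 2\max\{\|\grad\bu\|_{L^3},\|\grad\bw\|_{L^3}\}=2r$, which gives (LLC) with $C_2=2c_2$.

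The Lipschitz pointwise bound (PLLC) is essentially triangle-inequality bookkeeping, so the genuine obstacle is (PSM): producing the sharp cubic lower bound $c_1|A-B|^3$ rather than a weaker degenerate estimate. I would handle this through the convexity/$p$-Laplacian route outlined above (the argument is classical and the references \cite{Layton2002error,layton1996,lady1991} cited before the lemma statement carry it out in detail), after which the remainder of the proof is a direct integration.
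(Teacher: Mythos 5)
The paper does not actually prove this lemma: it is stated as a known result with a pointer to \cite{Layton2002error,layton1996,lady1991}, so there is no internal proof to compare against. Your outline is the standard argument that those references carry out, and it is correct as far as it goes. The reduction to the pointwise inequalities (PSM) and (PLLC) followed by integration and the $(3,3,3)$ H\"older inequality is exactly right; (PLLC) with $c_2=1$ follows from the splitting $|A|A-|B|B=|A|(A-B)+(|A|-|B|)B$ as you say, and the bound $\||\grad\bu|+|\grad\bw|\|_{L^3}\le 2r$ gives (LLC) with $C_2=2$. For (PSM), your convexity/line-integral route does close: with $\Phi(A)=|A|A$ one has $D\Phi(A)[H]:H=|A|\,|H|^2+|A|^{-1}(A:H)^2\ge |A|\,|H|^2$, and the symmetrization $\int_0^1|B+t(A-B)|\,dt\ge\tfrac12\int_0^1|2t-1|\,dt\,|A-B|=\tfrac14|A-B|$ yields the cubic lower bound with $c_1=\tfrac14$; it would strengthen your write-up to include this two-line computation rather than deferring it, since it is precisely the step you identify as the genuine obstacle. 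With that filled in, your proof is complete and self-contained, which is more than the paper itself provides.
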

\begin{proposition}(see p.173 
\cite{tool2012mathematical}) Let $W^{m,p}(\Omega)$ denote the Sobolev space, let $p\in[1,+\infty]$ and $q\in[p,p^*]$, \text{where}\  $\frac{1}{p^\star}=\frac{1}{p}-\frac{1}{d}\;\;\text{if}\;\; p<\text{dim}(\Omega)=d$. There is a $C>0$ such that
\begin{equation}\label{soblev2}
    \| u\|_{L^q(\Omega)}\leq C\| u\|_{L^p(\Omega)}^{1+d/q-d/p}\| u\|_{W^{1,p}(\Omega)}^{d/p-d/q},\quad\forall  u\in W^{1,p}(\Omega)
\end{equation}
\end{proposition}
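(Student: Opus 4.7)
The plan is to derive the stated inequality as a Gagliardo--Nirenberg-type interpolation between the two endpoints $L^p(\Omega)$ and $L^{p^\star}(\Omega)$, using the Sobolev embedding at the upper endpoint. The claim essentially combines two standard ingredients: (i) the log-convexity of $L^r$-norms, and (ii) the Sobolev embedding $W^{1,p}(\Omega)\hookrightarrow L^{p^\star}(\Omega)$ when $p<d$. Since the statement assumes $q\in[p,p^\star]$, the interpolation parameter is forced, and the exponents in the inequality should fall out of a short bookkeeping calculation.

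First I would fix $q\in[p,p^\star]$ and choose $\theta\in[0,1]$ so that
\begin{equation*}
\frac{1}{q}=\frac{\theta}{p}+\frac{1-\theta}{p^\star}.
\end{equation*}
Substituting $1/p^\star = 1/p-1/d$ and solving gives $\theta=1+d/q-d/p$ and $1-\theta=d/p-d/q$, which already match the exponents in the target inequality. Then H\"older's inequality \eqref{holder}, applied with the conjugate pair $(1/\theta,1/(1-\theta))$ to the product $|\bu|^{q\theta}\cdot|\bu|^{q(1-\theta)}$, yields the interpolation bound
\begin{equation*}
\|\bu\|_{L^q(\Omega)}\leq \|\bu\|_{L^p(\Omega)}^{\theta}\,\|\bu\|_{L^{p^\star}(\Omega)}^{1-\theta}.
\end{equation*}

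Next I would bound the $L^{p^\star}$ factor using the classical Sobolev embedding $W^{1,p}(\Omega)\hookrightarrow L^{p^\star}(\Omega)$, which on a sufficiently regular bounded domain gives $\|\bu\|_{L^{p^\star}(\Omega)}\leq C(\Omega)\|\bu\|_{W^{1,p}(\Omega)}$; the case $p\geq d$ and the endpoint $q=p$ need only a trivial inclusion. Inserting this into the interpolation estimate, together with the explicit values of $\theta$ and $1-\theta$, produces the stated bound \eqref{soblev2}.

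The only nontrivial step is the Sobolev embedding itself, which I would invoke as an external fact (the Gagliardo--Nirenberg--Sobolev inequality on bounded Lipschitz domains), since its proof via either the representation formula for $u$ in terms of $\nabla u$ or the Gagliardo truncation argument is well-known and lies outside the scope of the present paper; once that is accepted, the rest is H\"older plus arithmetic of exponents. This is consistent with the authors citing \cite{tool2012mathematical} rather than reproducing the proof.
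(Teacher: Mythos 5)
The paper offers no proof of this proposition at all; it is stated as a known result with a citation to p.~173 of the referenced text, so there is nothing internal to compare against. Your argument is the standard textbook derivation (Lyapunov/log-convexity interpolation of $L^r$ norms between the endpoints $L^p$ and $L^{p^\star}$, followed by the Sobolev embedding $W^{1,p}(\Omega)\hookrightarrow L^{p^\star}(\Omega)$), and your bookkeeping of the exponents is right: solving $\tfrac{1}{q}=\tfrac{\theta}{p}+\tfrac{1-\theta}{p^\star}$ with $\tfrac{1}{p^\star}=\tfrac{1}{p}-\tfrac{1}{d}$ does give $\theta=1+d/q-d/p$ and $1-\theta=d/p-d/q$, matching the statement. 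One small mechanical slip: applying H\"older with the conjugate pair $\bigl(1/\theta,\,1/(1-\theta)\bigr)$ directly to $|\bu|^{q\theta}\cdot|\bu|^{q(1-\theta)}$ yields only the tautology $\int|\bu|^q\leq\bigl(\int|\bu|^q\bigr)^{\theta}\bigl(\int|\bu|^q\bigr)^{1-\theta}$; to obtain $\|\bu\|_{L^q}\leq\|\bu\|_{L^p}^{\theta}\|\bu\|_{L^{p^\star}}^{1-\theta}$ you must instead use the exponents $r=p/(q\theta)$ and $r'=p^\star/(q(1-\theta))$, which are conjugate precisely because of the defining relation for $\theta$. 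With that correction the proof is complete and standard; your remark that the $p\geq d$ and $q=p$ cases are degenerate (and that the proposition's hypothesis only defines $p^\star$ for $p<d$) is also fair, as that looseness is in the paper's statement rather than in your argument.
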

The weak formulation of $\eqref{CSM8}$ is: Find $( w,p)\in (X,Q)$ such that
\begin{equation}\label{cont-CSM}
\begin{aligned}
    ( w_t, v) +\frac{C_s^4}{\mu^2}\delta^2 (\grad w_t,\grad  v) +( w\cdot\grad w, v)+\nu(\grad w,\grad  v)\\
    -( p,\grad\cdot  v)+\Big((C_s\delta)^2|\grad w|\grad w,\grad  v\Big)=(f, v)\   \text{for all}\  v\in X,\\
    (q,\grad\cdot w)=0\ \text{for all}\ q\in Q.\ 
\end{aligned}
\end{equation}
For the stationary Smagorinsky model, Du and Gunzburger \cite{du1990finite,lady1991} proved that the discrete solution converges to the continuous problem under minimal regularity assumptions. The existence and uniqueness of the strong solution of the Smagorinsky model \eqref{smag} on a periodic domain have been discussed \cite{malek2019weak,lee2012finite,layton2016energy}. Recently, the error estimates for Smagorinsky model have also been studied in \cite{burman2021error} and it showed that both the accuracy and the stability are enhanced for flows with high Reynolds number. The existence and uniqueness of strong solutions of the incompressible Navier-Stokes-Voigt model is studied in \cite{baranovskii2020strong}. 

Here we omit the proof for the existence of a strong solution for the new CSM Model. We assume the model has a solution in the following sense.
\begin{definition}
A solution $ w$ of the Corrected Smagorinsky Model \eqref{CSM8} is a strong solution if $ w$ satisfies the following
\begin{enumerate}
    \item $ w\in L^\infty(0,T;H^1(\Omega))\cap L^2(0,T; W^{1,3}(\Omega))\cap L^2(0,T;L^6(\Omega)),$
    \item $ w(x,t)\rightarrow  w_0(x)$ in $L^2(\Omega)$ as $t\rightarrow 0,$
    \item $ w$ satisfies the model's weak form \eqref{cont-CSM} for all $ v\in L^\infty(0,T;H^1(\Omega))\\ \cap L^2(0,T;W^{1,3}(\Omega))
    \cap L^2(0,T;L^6(\Omega))$.
\end{enumerate}
\end{definition}
\begin{remark}
Though existence of strong solutions is not yet proven for the new model, we believe it is reasonable to assume existence because it is known for the Smagorinsky Model and the extra Voigt term is linear and regularizing.
\end{remark}
\begin{definition}\label{def:2.14}
(Mean, Fluctuation and Variance) The ensembles $ u(x,t;\omega_j),$\\$\ j=1,\dots,J$\  where  $\omega$ is a a random variable, mean $\overline{ u}$ and fluctuation $ u'$ are defined as follows:
\begin{align*}
    \overline{ u}( x,t)=\frac{1}{J}\sum_{j=1}^J  u( x,t;\omega_j),\ \ \  u'( x,t,\omega_j)= u( x,t;\omega_j)-\overline{ u}( x,t).
\end{align*}
The variance of $ u$ and $\grad  u$ are, respectively,
\begin{align*}
    V( u):=\int_\Omega \overline{| u|^2}-|\overline{ u}|^2\ d x,\ \ \ V(\grad  u):=\int_\Omega \overline{|\grad  u|^2}-|\grad \overline{ u}|^2\ d x.
\end{align*}
\end{definition}

\begin{definition}\label{properties}
(Reynolds Stresses) The Reynolds stresses are 
\begin{align*}
    R( u, u):=\overline{ u}\otimes\overline{ u}-\overline{ u\otimes  u}=-\overline{ u'\otimes  u'}.
\end{align*}
\end{definition}
Ensemble averaging satisfies the following properties, e.g., \cite{k-epsilon,fluid-mechanics,random-func}
\begin{align*}
    &\overline{\overline{ u}}=\overline{ u},\quad\quad\quad\quad\quad\ \overline{ u'}=0, \quad
    \overline{\overline{ w}\cdot  v}=\overline{ w}\cdot\overline{ v}, \ \ \quad\quad 
    \overline{\overline{ w}\cdot  v'}=\overline{ w}\cdot\overline{ v'}=0,\\
    &\overline{\overline{ w}\otimes  v}=\overline{ w}\otimes\overline{ v}, \ \ \quad \overline{\overline{ w}\otimes  v'}=\overline{ w}\otimes\overline{ v'}=0, \quad
    \frac{\partial}{\partial t}\overline{ u}=\overline{\frac{\partial}{\partial t} u},\quad\quad\quad \frac{\partial}{\partial  x}\overline{ u}=\overline{\frac{\partial}{\partial  x} u}.
\end{align*}
\begin{theorem}\label{b}
Suppose that each realization is a strong solution of the NSE. The ensemble is generated by different initial data and $ u( x,0;\omega_j)\in L^2(\Omega),\ f( x,t)\in L^{\infty}(0,\infty;L^2(\Omega))$. Then the following two properties are satisfied.\\

$Property\ 1:$ (Time averaged dissipativity) 
\begin{equation*}\lim_{T\rightarrow\infty}\frac{1}{T}\int_0^T\int_{\Omega}R( u, u):\grad\bar{ u}\ d x dt=\lim_{T\rightarrow\infty}\frac{1}{T}\int_{\Omega}\nu\overline{|\grad u'|^2}\ d x dt\geq 0.
\end{equation*}
$Property\ 2:$ (Equation for the evolution of variance of fluctuations)
\begin{equation}\label{be}
\int_{\Omega}R( u, u):\grad\bar{ u}\ d x=\frac{1}{2}\frac{d}{dt}\int_{\Omega}\overline{| u'|^2}\ d x+\int_{\Omega}\nu\overline{|\grad u'|^2}\ d x.
\end{equation}
\end{theorem}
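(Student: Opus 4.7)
The plan is to derive Property 2 first from an energy identity for the fluctuation equation, and then obtain Property 1 by time-averaging and taking the limit $T\to\infty$.

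First I would write the NSE satisfied by each realization $\bu(\bx,t;\omega_j)$, take the ensemble average, and use $\overline{\bu\otimes\bu}=\overline{\bu}\otimes\overline{\bu}-R(\bu,\bu)$ (which follows directly from \cref{properties}) to arrive at the averaged momentum equation
\begin{equation*}
\overline{\bu}_t+\overline{\bu}\cdot\grad\overline{\bu}-\div R(\bu,\bu)-\nu\Dt\overline{\bu}+\grad\overline{p}=f.
\end{equation*}
Subtracting this from the instantaneous NSE yields the fluctuation equation
\begin{equation*}
\bu'_t+\overline{\bu}\cdot\grad\bu'+\bu'\cdot\grad\overline{\bu}+\bu'\cdot\grad\bu'+\div R(\bu,\bu)-\nu\Dt\bu'+\grad p'=0,
\end{equation*}
together with $\div\bu'=0$.

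Next I would take the $L^2(\Omega)$ inner product of the fluctuation equation with $\bu'$ and then apply the ensemble average. The key simplifications use \cref{trilinear1} and the properties listed after \cref{properties}: the terms $(\overline{\bu}\cdot\grad\bu',\bu')$ and $(\bu'\cdot\grad\bu',\bu')$ vanish by skew-symmetry (since $\div\overline{\bu}=\div\bu'=0$); the pressure term vanishes by integration by parts; and $\overline{(\div R(\bu,\bu),\bu')}=(\div R(\bu,\bu),\overline{\bu'})=0$ because $R$ is deterministic and $\overline{\bu'}=0$. The remaining nonlinear term
\begin{equation*}
\overline{(\bu'\cdot\grad\overline{\bu},\bu')}=\int_\Omega\overline{\bu'\otimes\bu'}:\grad\overline{\bu}\,d\bx=-\int_\Omega R(\bu,\bu):\grad\overline{\bu}\,d\bx,
\end{equation*}
gives exactly the Reynolds stress contribution after using $R(\bu,\bu)=-\overline{\bu'\otimes\bu'}$. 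Combining these with the time-derivative and dissipation terms produces Property 2.

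For Property 1 I would integrate the identity of Property 2 from $0$ to $T$, divide by $T$, and pass to the limit. The key observation is that the time-derivative term telescopes to $\frac{1}{2T}\int_\Omega\overline{|\bu'(T)|^2}-\overline{|\bu'(0)|^2}\,d\bx$, which vanishes as $T\to\infty$ because the standard NSE energy estimate (using $f\in L^\infty(0,\infty;L^2)$ and $\bu(\cdot,0;\omega_j)\in L^2$) gives a uniform-in-time bound on $\|\bu(\cdot,t;\omega_j)\|$, hence on $\|\bu'\|$. Nonnegativity of the limit is immediate from $\nu\int_\Omega\overline{|\grad\bu'|^2}\,d\bx\geq 0$. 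The main obstacle, if any, is the careful bookkeeping of ensemble-averaging of nonlinear products (in particular justifying the interchange of averaging with differentiation and integration for strong solutions) and checking that the uniform $L^2$ bound on $\bu$ transfers to $\bu'$ so that the telescoping boundary term is controlled.
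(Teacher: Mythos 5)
Your proposal is correct and is essentially the standard argument: the paper itself does not reproduce a proof but defers to Section~2 of Jiang and Layton \cite{jiang2016ev}, and the derivation you outline (fluctuation equation, $L^2$ inner product with $\bu'$, ensemble averaging with the skew-symmetry and $\overline{\bu'}=0$ cancellations, then time-averaging with the uniformly bounded telescoping term) is precisely that argument. The only point worth flagging is that, as stated, the existence of the limit in Property~1 is implicitly assumed rather than proved, but this caveat is inherited from the original statement and is not a defect introduced by your proof.
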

\begin{proof}
Proof of this theorem can be found in Section 2 of \cite{jiang2016ev}.
\end{proof}

\begin{remark}\label{equilibirum}
(Statistical steady state and statistical equilibrium, see \cite{jiang2016ev})  Statistical steady state is 
$\mathcal{P}/\epsilon= 1$ where
\begin{gather*}
    \epsilon=\text{dissipation of turbulent kinetic energy (TKE)}=\nu\overline{\|\grad  u'\|^2}, \\
    \mathcal{P}=\text{production of TKE}=\int_\Omega R( u, u):\grad\overline{ u} d x.
\end{gather*}
Hence $\frac{\mathcal{P}}{\epsilon}= 1$ implies $\int_\Omega R( u, u):\grad \overline{ u}\ d x= \int_\Omega \nu\overline{|\grad  u'|^2}\  d x$.
\end{remark}

\section{Model derivation}\label{sec:3}
In this section, we develop a model for turbulence not at statistical equilibrium 
unlike the Smagorinsky model \eqref{smag}.

Consider the Navier-Stokes Equations (NSE) which govern the flow of an incompressible fluid with velocity $ u( x,t)$, pressure $p( x,t)$, prescribed body force $f$ and kinematic viscosity $\nu$ in the regular and bounded flow domain $\Omega\subset  \mathbb{R}^d (d=2,\ 3)$:
\begin{equation}\label{nse1}
\begin{aligned}
 u_t+ u\cdot\grad u-\nu\Delta{ u}+\grad p=f( x) \ \text{in}\ \Omega,\ \text{and}\ \div u&=0 \ \text{in}\ \Omega.
\end{aligned}
\end{equation}
To derive the Corrected Smagorinsky model, following the work in \cite{jiang2016ev}, we begin with an ensemble of NSE solution $u(x,t;w_j)$ with perturbed initial data $ u( x,0;\omega_j)= u_0( x;\omega_j),\ j=1,\ 2,\dots,\ J,\  x\in\Omega.$ 

The goal of a turbulent model solution of \eqref{smag} and \eqref{CSM0} is to approximate $\overline{ u}( x, t)$.
By ensemble averaging the NSE gives a system that is not closed
since $\overline{ u u}\neq \bar{ u}\bar{ u}$. Hence the Reynolds stress tensor, $R( u, u)=\bar{ u}\bar{ u}-\overline{ u u}$ which is accountable for all effects of the fluctuations on the mean flow must be modeled \cite{rong2019extension}. We rewrite $\overline{ u u}$ as $\overline{ u u}=\bar{ u}\bar{ u}- R( u, u)$. Note that by using properties in \eqref{properties}, we get  $R=-\overline{ u' u'}$.
Hence we get,
\begin{equation}\label{ennse1}
\begin{aligned}
\overline{ u_t}+\bar{ u}\cdot\grad\bar{ u}-\nu\Delta{\bar{ u}}+\grad\bar{p}-\div R=f( x) \ \text{in}\ \Omega,\ \text{and}\ 
\div\bar{ u}=0 \ \text{in}\ \Omega.
\end{aligned}
\end{equation}
Take the dot product of first and second equation in (\eqref{ennse1}) with mean flow $\bar{ u}$ and $\bar{p}$ respectively and doing integration by parts, we get the energy estimate as follows \cite{rong2019extension,jiang2016ev}
\begin{align}\label{ennse3}
    \underbrace{\frac{1}{2}\frac{d}{dt}\|\bar{ u}\|^2}_{\text{rate of change of kinetic energy }}+\underbrace{\nu\|\grad\bar{ u}\|^2}_{\text{energy dissipation due to viscous forces}}\notag\\ 
    +\underbrace{\int_{\Omega}R( u, u):\grad\bar{ u}\ d x}_{\text{effect of fluctuation }} =\underbrace{(f,\bar{ u}).}_{\text{energy input through body force-flow interaction}}
\end{align}
In (\eqref{ennse3}), if the term $\int_{\Omega}R( u, u):\grad\bar{ u}\ d x>0$, the effect of $R( u, u)$ is dissipative while if $\int_{\Omega}R( u, u):\grad\bar{ u}\ d x<0$, 
fluctuations $ u'$ transfers energy back to mean $\Bar{ u}$ which causes increased energy in mean flow. 

Property 1 in \cref{b} is consistent with the assumption of Boussinesq \cite{bous} that turbulent fluctuations are dissipative on the mean in the time averaged case. In property 2 of \cref{b}, the term $\int_{\Omega}\nu\overline{|\grad u'|^2}\ d x$ is clearly dissipative while $\frac{d}{dt}\int_{\Omega}\overline{| u'|^2}\ d x=0$ for flows at statistical equilibrium. 
The idea of any EV model is based on three assumptions \cite{jiang2016ev}. 
Firstly, the statistical equilibrium assumption that dissipativity holds at each instant time 
\begin{equation}\label{statistical}
\int_{\Omega}R( u, u):\grad\bar{ u}\ d x\simeq \int_{\Omega}\nu\overline{|\grad u'|^2}\ d x.
\end{equation}
The second assumption is that $\grad u'$ aligns with $\grad\bar{ u}$. Third, calibration \cite{jiang2016ev} provides that the action of fluctuating velocities can be represented in terms of mean flow
\begin{equation*}
    action(\grad u')\simeq a(\bar{ u})\grad{\bar{ u}}.
\end{equation*}
Combining all these three assumptions
results in the eddy viscosity closure, 
\begin{equation*}
-\div R( u, u)\Leftarrow -\div(\nu_T(\bar{ u})\grad\bar{ u})+\text{terms incorporated in}\ \grad \bar{p}.
\end{equation*}
Here $\nu_T$ denotes the turbulent viscosity. 
Thus we have the eddy viscosity (EV) model: $\grad\cdot w=0 $ and
\begin{equation}\label{ev}
     w_t+ w\cdot\grad w-\nu\Dt w+\grad q-\div\left(\nu_T( w)\grad w\right)=f( x)
    .
\end{equation}
The solution $( w,q)$ of (\eqref{ev}) is an approximation of the ensemble average ($\bar{ u},\bar{p}$).
In 1963, Smagorinsky \cite{Smagorinsky} 
model $\nu_T$ by
\begin{equation}\label{nuT}
    \nu_T=(C_s\delta)^2|\grad  w|,
\end{equation}
where $C_s\approx 0.1$, 
Lilly \cite{lilly}. Let $\Dt x$ to be the mesh size and $\delta=\Delta x << 1$ 
is the model length scale \cite{smagorinsky1993}. Thus we get the classic Smagorinsky model \eqref{smag}.


By taking the dot product with $ w$, here we have the energy equality for Smagorinsky model:
\begin{equation*}
    \frac{1}{2}\frac{d}{dt}\| w\|^2+\nu\|\grad w\|^2+(C_s\delta)^2\|\grad w\|_{L^3}^3 =(f, w).
\end{equation*}
$(C_s\delta)^2\|\grad w\|^3\geq0 $ approximates the average energy dissipated by fluctuation.
Since it is positive, it prevents the energy from returning to the mean flow. 
We aim to remove this flaw in the corrected model. Notice that in \eqref{statistical}, the term $\frac{d}{dt}\int_{\Omega}\overline{| u'|^2}\ d x$ is omitted for flows at statistical equilibrium. This term is accountable for backscatter and other non-equilibrium effects. To model this term, we must express $u'$ in terms of $\bar{u}$. One idea in \cite{jiang2016ev} is that since the Smagorinsky model is dimensionally consistent, it must conform to the Kolmogorov-Prandtl relation  \cite{kolmogrov,prandtl}

\begin{equation}\label{k-p relation}
\nu_T=\mu l\sqrt{k'},
\end{equation}
where $\mu\approx 0.3 \ \text{to} \ 0.55$, $l=\text{turbulent length scale}$ and $k'$ is the turbulent kinetic energy:
$
    k'( x, t)=\frac{1}{2} \overline{| u'( x, t)|^2}.
    $
Thus, the Smagorinsky model contains a implicit model of $l$ and $k'$. Equating \eqref{nuT} with  \eqref{k-p relation} gives 
\begin{equation*}
    \mu l \sqrt{k'}=\nu_T=(C_s\delta)^2|\grad w|=\mu\delta\Big(\frac{C_s^2\delta}{\mu}|\grad w|\Big).
\end{equation*}
Here $\delta$ is the obvious choice for $l$. With
$\delta=l$, the Smagorinsky Model yields the model  $k'=C_s^4\delta^2\mu^{-2}|\grad w|^2$. Hence, the omitted term responsible for non-equilibrium effects is modeled as
\begin{align*}
\frac{d}{dt}\int_\Og k'\ d x
=\frac{d}{dt}C_s^4\delta^2\mu^{-2}(\grad w,\grad w)&=C_s^4\delta^2\mu^{-2}(-\Delta w_t, w).
\end{align*}
By including $C_s^4\delta^2\mu^{-2}\Delta  w_t$ in the model, its energy balance has a consistent representation of the term  $\frac{1}{2}\frac{d}{dt}\overline{\| u'\|^2}$.  
As a result, the Corrected Smagorinsky Model (CSM) is: $\div w=0$ and
\begin{equation}\label{CSM8}
 w_t-C_s^4\delta^2\mu^{-2}\Dt w_t+ w\cdot\grad w-\nu\Dt w+\grad q-\div\Big((C_s\delta)^2|\grad w|\grad w\Big)=f( x).
\end{equation}
Here we impose the no-slip boundary condition, $ w=0$ on $\partial\Og$.

\section{Basic Properties of the Model} \label{sec:4}
In this section, we develop some basic properties of the model which are useful in numerical analysis. In particular, we derive the basic energy estimate, we prove a stability bound and uniqueness of the solution. We also analyze the modeling error and numerical error of the model.
\subsection{Energy Estimate for the CSM}
We will 
identify the model's kinetic energy and energy dissipation in \cref{thm:energy}.
\begin{theorem}\label{thm:energy}
Let $ w$ be a strong solution of the Corrected Smagorinsky Model \eqref{CSM8}, then the following energy estimate holds
\begin{equation}\label{CSM9}
\begin{aligned}
    &\frac{1}{2}\frac{1}{|\Omega|}\frac{d}{dt}\Big(\| w\|^2+\frac{C_s^4\delta^2}{\mu^2}\|\grad w\|^2\Big)\\
    +\frac{1}{|\Omega|}\nu\|\grad w\|^2&+\frac{1}{|\Omega|}(C_s\delta)^2\|\grad w\|_{L^3}^3=\frac{1}{|\Omega|}\langle f, w\rangle.
\end{aligned}
\end{equation}
\end{theorem}
\begin{proof}
First, we take dot product in (\eqref{CSM8}) with $ w$ and do integration by parts which is shown below,
\begin{gather*}
    \int_{\Omega}\Big( w_t-C_s^4\delta^2\mu^{-2}\Dt w_t+ w\cdot\grad w-\nu\Dt w+\grad q-\div\Big((C_s\delta)^2|\grad w|\grad w\Big)\Big)\cdot  w d x\\
    =\int_{\Omega}f\cdot w\ d x.
\end{gather*}
Here, $\int_{\Omega} w_t\cdot w \ d x=\frac{d}{dt}\Big(\frac{1}{2}\int_{\Omega}| w|^2\ d x\Big).$
By Lemma \eqref{trilinear1},
$
    \int_\Omega  w\cdot\grad w\cdot w\ d x=0.
$
Next, $-\nu\int_{\Omega}\Delta w\cdot w \ d x
=\int_{\Omega}\nu|\grad w|^2\ d x.$
The next term,
$
  \int_{\Omega}\grad q\cdot  w \ d  x
  =\int_{\partial\Omega}{p w\cdot\hat{n}}\ \ ds-\int_{\Omega}{p \div  w }\ d  x=0.
$
The final term,
\begin{equation*}
    \int_\Omega -\grad\cdot\left((C_s\delta)^2|\grad w|\grad w\right) w\ d x=\int_\Omega (C_s\delta)^2|\grad w|\grad w\cdot\grad w\ dx=\int_\Omega(C_s\delta)^2|\grad w|^3\ d x.
\end{equation*}
Hence combining all these terms we get the following energy estimate per unit volume,
\begin{equation*}
\begin{aligned}
    &\frac{1}{2}\frac{1}{|\Omega|}\frac{d}{dt}\Big(\| w\|^2+\frac{C_s^4\delta^2}{\mu^2}\|\grad w\|^2\Big)\\
    +\frac{1}{|\Omega|}\nu\|\grad w\|^2&+\frac{1}{|\Omega|}(C_s\delta)^2\|\grad w\|_{L^3}^3=\frac{1}{|\Omega|}\langle f, w\rangle.
\end{aligned}
\end{equation*}
\end{proof}
\begin{remark}
In \eqref{CSM9}, we can identify the following quantities:
\begin{enumerate}
    \item Model kinetic energy of mean flow per unit volume
    \begin{equation*}
    MKE:=\frac{1}{2}\frac{1}{|\Omega|}\Big(\| w\|^2+\frac{C_s^4}{\mu^2}\delta^2\|\grad w\|^2\Big).
    \end{equation*}
    And the second term in MKE coming from the Corrected Smagorinsky Model is the turbulent kinetic energy per unit volume.
    \item Rate of energy dissipation of mean flow per unit volume
    \begin{equation*}
    \varepsilon_{CSM}(t):=\frac{1}{|\Omega|}\Big(\nu\|\grad w\|^2+(C_s\delta)^2\|\grad w\|_{L^3}^3\Big).
    \end{equation*}
    This controls the time rate of change of kinetic energy. It's always positive and it reduces the accumulation of kinetic energy.
\item Rate of energy input to mean flow per unit volume is
$\frac{1}{|\Omega|}\langle f, w\rangle.$
\end{enumerate}
\end{remark}
\subsection{Stability}
Next, we give the stability bound of the Corrected Smagorinsky Model \eqref{CSM8} in \cref{thm:stability}. We prove the model kinetic energy is bounded uniformly in time and the time-averaged model energy dissipation rate is bounded as well in the same Theorem.
\begin{theorem}\label{thm:stability}
(Stability of $ w$) \eqref{CSM8} is unconditionally stable. The solution $ w$ of \eqref{CSM8}  satisfies the following inequality
\begin{equation*}
     \| w(T)\|^2+\frac{C_s^4}{\mu^2}\delta^2\|\grad w(T)\|^2\leq e^{-\alpha T}\Big\{\| w(0)\|^2+\frac{C_s^4}{\mu^2}\delta^2\|\grad w(0)\|^2+\frac{C}{\alpha}(e^{\alpha T}-1)\Big\},
\end{equation*}
where 
$
\alpha=\min\{\frac{\nu}{2C_{PF}^2},\ \frac{\mu^2\nu}{C_s^4\delta^2}\},
$
and 
if $f\in L^2(\Omega)$, we get $$\max_{0\leq t<\infty}\Big(\| w\|^2+\frac{C_s^4\delta^2}{\mu^2}\|\grad w\|^2\Big)\leq C'< \infty.$$
and
\begin{equation*}
    \mathcal{O}(\frac{1}{T})+\frac{1}{|\Omega|}\frac{1}{T}\int_0^T\Big(\frac{\nu}{2}\|\grad w\|^2+(C_s\delta)^2\|\grad w\|_{L^3}^3\Big)\ dt\leq \frac{1}{|\Omega|}\frac{1}{T}\int_0^T \frac{C_{PF}^2}{2\nu}\|f\|^2\ dt.
\end{equation*}
\end{theorem}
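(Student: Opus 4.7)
The plan is to reduce all three claims to a single scalar differential inequality for the augmented energy
$$E(t):=\|\bw(t)\|^2+\frac{C_s^4\delta^2}{\mu^2}\|\grad\bw(t)\|^2,$$
and then read off each conclusion by, respectively, applying Gronwall, taking the supremum in time, and integrating and dividing by $T$. The starting point is the energy equality of \cref{thm:energy}. I would handle the right-hand side by Cauchy--Schwarz, Poincar\'e--Friedrichs \eqref{pf ineq}, and Young's inequality \eqref{young},
$$\langle f,\bw\rangle\leq\|f\|\,\|\bw\|\leq C_{PF}\|f\|\,\|\grad\bw\|\leq\frac{C_{PF}^2}{2\nu}\|f\|^2+\frac{\nu}{2}\|\grad\bw\|^2,$$
and absorb the gradient piece on the left, producing the master inequality
$$\frac{1}{2}\dot{E}(t)+\frac{\nu}{2}\|\grad\bw\|^2+(C_s\delta)^2\|\grad\bw\|_{L^3}^3\leq\frac{C_{PF}^2}{2\nu}\|f\|^2.$$

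From here the third (time-averaged) bound follows directly: integrating on $[0,T]$, dividing by $T$, and dropping $E(T)\geq 0$ leaves the term $-E(0)/(2T)$, which is precisely the $\mathcal{O}(1/T)$ placeholder in the statement. For the exponential decay bound, I would split $\frac{\nu}{2}\|\grad\bw\|^2=\frac{\nu}{4}\|\grad\bw\|^2+\frac{\nu}{4}\|\grad\bw\|^2$, use Poincar\'e on the first half to obtain $\frac{\nu}{4C_{PF}^2}\|\bw\|^2$, and keep the second half in its original form, so the two halves together dominate $\frac{\alpha}{2}E(t)$ for $\alpha$ as stated. Discarding the nonnegative $L^3$ dissipation then reduces the master inequality to the scalar ODE $\dot{E}+\alpha E\leq C_{PF}^2\|f\|^2/\nu$; multiplying by the integrating factor $e^{\alpha t}$, integrating on $[0,T]$, and rearranging yields the first estimate in the theorem. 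The uniform-in-time bound is an immediate corollary, since for time-independent $f\in L^2(\Omega)$ the right-hand side of the exponential estimate is bounded uniformly in $T$ by $E(0)+C_{PF}^2\|f\|^2/(\alpha\nu)$.

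The only subtle point is the calibration of $\alpha$. Balancing $\frac{\nu}{4}\|\grad\bw\|^2\geq\frac{\alpha}{2}\|\bw\|^2$ through Poincar\'e forces $\alpha\leq\nu/(2C_{PF}^2)$, and $\frac{\nu}{4}\|\grad\bw\|^2\geq\frac{\alpha}{2}\cdot\frac{C_s^4\delta^2}{\mu^2}\|\grad\bw\|^2$ forces $\alpha\leq\mu^2\nu/(2C_s^4\delta^2)$; taking the minimum reproduces the stated $\alpha$ up to a harmless numerical factor. No step is genuinely difficult: the $L^3$ dissipation plays the role of a nonnegative spectator, retained on the left in the time-averaged estimate and simply discarded in the Gronwall step, and the nonlinear convective term is killed by the skew-symmetry from \cref{trilinear1} already used in the proof of \cref{thm:energy}.
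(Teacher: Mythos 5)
Your proposal is correct and follows essentially the same route as the paper: start from the energy equality, control $(f,\bw)$ by Cauchy--Schwarz, Poincar\'e--Friedrichs and Young, split the viscous dissipation so that one piece dominates $\|\bw\|^2$ and the other dominates $\frac{C_s^4\delta^2}{\mu^2}\|\grad\bw\|^2$, drop the nonnegative $L^3$ term for the Gronwall step, and integrate the equality directly for the time-averaged bound. The only difference is bookkeeping: the paper applies Young's inequality with $\frac{\epsilon}{2}\|\bw\|^2$ on the right (so a full copy of $\nu\|\grad\bw\|^2$ survives for the second component of $E$, giving $\alpha=\min\{\frac{\nu}{2C_{PF}^2},\frac{\mu^2\nu}{C_s^4\delta^2}\}$ exactly), whereas your absorption of $\frac{\nu}{2}\|\grad\bw\|^2$ costs you a factor of $2$ in the second argument of $\alpha$, which you correctly flag as harmless.
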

\begin{proof}
Take $L^2$ inner product of \eqref{CSM8} with $ w$, we get the following energy equality,
\begin{equation}\label{energy equality}
    \frac{1}{2}\frac{d}{dt}\Big(\| w\|^2+\frac{c_s^4}{\mu^2}\delta^2\|\grad w\|^2\Big)+\nu\|\grad w\|^2+(C_s\delta)^2\|\grad w\|_{L^3}^3=(f, w).
\end{equation}
Consider the RHS of \eqref{energy equality}, 
$
    (f, w)\leq \frac{\epsilon}{2}\| w\|^2+\frac{1}{2\epsilon}\|f\|^2.
$
Thus \eqref{energy equality} implies
\begin{equation*}
   \frac{d}{dt}\Big(\| w\|^2+\frac{C_s^4}{\mu^2}\delta^2\|\grad w\|^2\Big)+\nu\|\grad w\|^2+\nu\|\grad w\|^2+2(C_s\delta)^2\|\grad w\|_{L^3}^3\leq
   \epsilon\| w\|^2+\frac{1}{\epsilon}\|f\|^2.
\end{equation*}
Using the inequality $\| w\|\leq C_{PF}\|\grad w\|$, we have
\begin{equation*}
    \frac{d}{dt}\Big(\| w\|^2+\frac{C_s^4}{\mu^2}\delta^2\|\grad w\|^2\Big)+\frac{\nu}{C_{PF}^2}\| w\|^2+\nu\|\grad w\|^2+2(C_s\delta)^2\|\grad w\|_{L^3}^3\leq
    \epsilon\| w\|^2+\frac{1}{\epsilon}\|f\|^2.
\end{equation*}
Pick $\epsilon=\frac{\nu}{2C_{PF}^2}$ and drop the term $2(C_s\delta)^2\|\grad w\|_{L^3}^3$. We obtain
\begin{align*}
    \frac{d}{dt}\Big(\| w\|^2+\frac{C_s^4}{\mu^2}\delta^2\|\grad w\|^2\Big)+\frac{\nu}{2C_{PF}^2}\| w\|^2+\frac{\mu^2}{C_s^4\delta^2}\nu\Big(\frac{C_s^4}{\mu^2}\delta^2\|\grad w\|^2\Big)&\leq \frac{2}{\nu}C_{PF}^2\|f\|^2 .
\end{align*}
Let $\alpha=\min\{\frac{\nu}{2C_{PF}^2},\ \frac{\mu^2}{C_s^4\delta^2}\nu\}$, resulting in
\begin{align*}
    \frac{d}{dt}\Big(\| w\|^2+\frac{C_s^4}{\mu^2}\delta^2\|\grad w\|^2\Big)+\alpha \Big(\| w\|^2+\frac{C_s^4}{\mu^2}\delta^2\|\grad w\|^2\Big)\leq \frac{2}{\nu}C_{PF}^2\|f\|^2.
\end{align*}
Multiply by the integrating factor $e^{\alpha t}$ and integrate from $t=0$ to $t=T$, leading to
\begin{equation*}
    \| w(T)\|^2+\frac{C_s^4}{\mu^2}\delta^2\|\grad w(T)\|^2\leq e^{-\alpha T}\Big\{\| w(0)\|^2+\frac{C_s^4}{\mu^2}\delta^2\|\grad w(0)\|^2+\frac{C}{\alpha}(e^{\alpha T}-1)\Big\},
\end{equation*}
where $C=\frac{2}{\nu}C_{PF}^2\|f\|^2$.\\
This implies that kinetic energy is uniformly bounded, i.e. if $f\in L^2(\Omega)$, we get 
\begin{equation*}
\max_{0\leq t<\infty}\Big(\| w\|^2+\frac{C_s^4\delta^2}{\mu^2}\|\grad w\|^2\Big)\leq C'< \infty.
\end{equation*}
Integrate \eqref{energy equality} from $t=0$ to $t=T$ and divide by $|\Omega|$ and $T$, we have
\begin{equation}\label{ee}
\begin{aligned}
    \frac{1}{|\Omega|}\frac{1}{2T}\Big\{\Big(\| w(T)\|^2+\frac{C_s^4}{\mu^2}\delta^2\|\grad w(T)\|^2\Big)-\Big(\| w(0)\|^2+\frac{C_s^4}{\mu^2}\delta^2\|\grad w(0)\|^2\Big)\Big\} \\
    +\frac{1}{|\Omega|}\frac{1}{T}\int_0^T\Big(\nu\|\grad w\|^2+(C_s\delta)^2\|\grad w\|_{L^3}^3\Big)\ dt=\frac{1}{|\Omega|}\frac{1}{T}\int_0^T(f, w)\  dt.
\end{aligned}
\end{equation}
Consider the term on the right. Using the Poincar\'{e}-Friedrichs' inequality \eqref{pf ineq}, Cauchy Schwarz and Young's inequality 
gives
\begin{align*}
    \frac{1}{|\Omega|}\frac{1}{T}\int_0^T(f, w)\  dt&\leq \frac{1}{|\Omega|}\frac{1}{T}\int_0^T \frac{1}{\sqrt{\nu}}\|f\|C_{PF}\sqrt{\nu}\|\grad  w\|\ dt ,\\
    &\leq \frac{1}{|\Omega|}\frac{1}{T}\int_0^T \frac{\nu}{2}\|\grad  w\|^2\  dt+\frac{1}{|\Omega|}\frac{1}{T}\int_0^T \frac{C_{PF}^2}{2\nu}\|f\|^2\ dt.
\end{align*}
The first term in \eqref{ee} is bounded by the previous result. Thus,
\begin{equation*}
    \mathcal{O}(\frac{1}{T})+\frac{1}{|\Omega|}\frac{1}{T}\int_0^T\Big(\frac{\nu}{2}\|\grad w\|^2+(C_s\delta)^2\|\grad w\|_{L^3}^3\Big)\ dt\leq \frac{1}{|\Omega|}\frac{1}{T}\int_0^T \frac{C_{PF}^2}{2\nu}\|f\|^2\ dt.
\end{equation*}
The time-averaged dissipation is bounded.
\end{proof}
\subsection{Uniqueness}
In this subsection, we prove the uniqueness of the strong solution to \eqref{CSM8} in \cref{thm:unique}.
\begin{theorem}\label{thm:unique}
Assume $\grad w\in L^3(0,T;L^3(\Omega))$, the solution $ w$ of \eqref{CSM8} is unique.
\end{theorem}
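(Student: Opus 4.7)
The plan is a standard subtract-and-test energy argument. I would suppose $\bw_1,\bw_2$ are two strong solutions of \eqref{msm8} with the same initial data and body force and set $\bm{\phi} := \bw_1 - \bw_2$, so that $\bm{\phi}(\cdot,0) = 0$, $\div\bm{\phi} = 0$, and $\bm{\phi}(\cdot,t) \in V$ for each $t$. The argument then proceeds by subtracting the two weak formulations \eqref{cont-msm} and testing the difference against $\bm{\phi}$ itself; all remaining work is in estimating the resulting nonlinear terms and invoking Gronwall.

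After integration by parts, the time-derivative terms collapse to $\tfrac{1}{2}\tfrac{d}{dt}\bigl(\|\bm{\phi}\|^2 + \tfrac{C_s^4\delta^2}{\mu^2}\|\grad\bm{\phi}\|^2\bigr)$, the viscous term contributes $\nu\|\grad\bm{\phi}\|^2$, and the pressure difference vanishes because $\bm{\phi}\in V$. The Smagorinsky difference, tested with $\grad\bm{\phi}$, is bounded below by $C_1(C_s\delta)^2\|\grad\bm{\phi}\|_{L^3(\Omega)}^3 \geq 0$ by the strong monotonicity \eqref{strong-monotonicity} of Lemma \ref{monotonicity and LLC}, and I would simply discard this nonnegative dissipation. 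Splitting the convective difference as $b^*(\bm{\phi},\bw_1,\bm{\phi}) + b^*(\bw_2,\bm{\phi},\bm{\phi})$ and invoking skew-symmetry (Lemma \ref{trilinear1}) kills the second piece, so only $|b^*(\bm{\phi},\bw_1,\bm{\phi})|$ remains to be estimated.

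To control this term, I would apply H\"older with exponents $(3,3,3)$ to get $\|\grad\bw_1\|_{L^3(\Omega)}\|\bm{\phi}\|_{L^3(\Omega)}^2$, then use the Sobolev interpolation \eqref{soblev2} together with Poincar\'e--Friedrichs \eqref{pf ineq} to obtain $\|\bm{\phi}\|_{L^3(\Omega)}^2 \leq C\|\bm{\phi}\|\,\|\grad\bm{\phi}\|$ in 3d (the analogous 2d bound $\|\bm{\phi}\|_{L^3}^2 \leq C\|\bm{\phi}\|^{4/3}\|\grad\bm{\phi}\|^{2/3}$ leads to the same conclusion). A Young step absorbs the $\|\grad\bm{\phi}\|$ factor into the viscous dissipation $\nu\|\grad\bm{\phi}\|^2$ on the left, yielding a differential inequality of the form
\begin{equation*}
\frac{d}{dt}\Big(\|\bm{\phi}\|^2 + \tfrac{C_s^4\delta^2}{\mu^2}\|\grad\bm{\phi}\|^2\Big) \leq G(t)\Big(\|\bm{\phi}\|^2 + \tfrac{C_s^4\delta^2}{\mu^2}\|\grad\bm{\phi}\|^2\Big),
\end{equation*}
with $G(t) = C\nu^{-1}\|\grad\bw_1(t)\|_{L^3(\Omega)}^2$.

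The principal (really, only) obstacle is proving that $G\in L^1(0,T)$, and this is precisely where the hypothesis $\grad\bw\in L^3(0,T;L^3(\Omega))$ is used: it gives $G\in L^{3/2}(0,T)\subset L^1(0,T)$ for finite $T$. Applying the continuous Gronwall inequality to $E(t):=\|\bm{\phi}(t)\|^2 + \tfrac{C_s^4\delta^2}{\mu^2}\|\grad\bm{\phi}(t)\|^2$ with $E(0)=0$ then forces $E\equiv 0$, hence $\bw_1\equiv \bw_2$.
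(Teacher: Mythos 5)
Your proposal is correct and follows essentially the same route as the paper: subtract, test with $\bm{\phi}$, discard the Smagorinsky difference via strong monotonicity \eqref{strong-monotonicity}, bound the surviving convective term by $\|\grad\bw_1\|_{L^3}\|\bm{\phi}\|_{L^3}^2$ with the interpolation $\|\bm{\phi}\|_{L^3}^2\leq C\|\bm{\phi}\|\|\grad\bm{\phi}\|$, and close with Gronwall using $\grad\bw\in L^3(0,T;L^3(\Omega))$ to get the factor in $L^1(0,T)$. The only (immaterial) difference is that you absorb the $\|\grad\bm{\phi}\|^2$ piece into the viscous dissipation, whereas the paper chooses $\epsilon=2C_s^4\delta^2/\mu^2$ and folds it into the Gronwall factor multiplying the full energy $\|\bm{\phi}\|^2+\tfrac{C_s^4\delta^2}{\mu^2}\|\grad\bm{\phi}\|^2$.
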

\begin{proof}
Suppose $( w_1,q_1)$ and $( w_2,q_2)$ are two different solution of \eqref{CSM8} and let $ \phi,\ q$ denote the difference between two solutions: $ \phi= w_1- w_2$,  $q=q_1-q_2$,  $ \phi$ satisfies $\grad\cdot \phi=0$ and
\begin{align*}
    \frac{\partial}{\partial t}\left( \phi-\frac{C_s^4}{\mu^2}\delta^2\Delta \phi\right)+ w_1\cdot\grad w_1- w_2\cdot\grad w_2-\nu\Delta \phi+\grad q\\-(C_s\delta)^2\grad\cdot(|\grad w_1|\grad w_1-|\grad w_2|\grad w_2)=0 .
\end{align*}
Take the $L^2$ inner product with $ \phi$ and  let $\Tilde{ w}$ represent either $ w_1$ or $ w_2$, we obtain
\begin{gather*}
    \frac{1}{2}\frac{d}{dt}\left(\| \phi\|^2+\frac{C_s^4}{\mu^2}\delta^2\|\grad \phi\|^2\right)+\nu\|\grad \phi\|^2\\
    +(C_s\delta)^2\int_\Omega[|\grad w_1|\grad w_1-|\grad w_2|\grad w_2]\cdot\grad( w_1- w_2)\ d x=-\int_\Omega \phi\cdot\grad\Tilde{ w}\cdot \phi\ d x.
\end{gather*}
Using the Strong Monotonicity \eqref{strong-monotonicity}, we get
\begin{equation*}
    \frac{1}{2}\frac{d}{dt}\left(\| \phi\|^2+\frac{C_s^4}{\mu^2}\delta^2\|\grad \phi\|^2\right)+\nu\|\grad \phi\|^2+C_1(C_s\delta)^2\|\grad \phi\|_{L_3}^3\leq -\int_\Omega \phi\cdot\grad w\cdot \phi\ d x.
\end{equation*}
Consider the RHS, using \eqref{soblev2} in 3D space,
\begin{align*}
    \left|-\int_\Omega \phi\cdot\grad w\cdot \phi\ d x\right| &\leq\|\grad w\|_{L^3}\| \phi\|_{L^3}^2, \\
    &\leq C\|\grad w\|_{L^3}(\| \phi\|^{1/2}\|\grad \phi\|^{1/2})^2, \\
    &\leq \frac{\epsilon}{2}\|\grad \phi\|^2+C(\epsilon)\|\grad w\|_{L^3}^2\| \phi\|^2.
\end{align*}
Pick $\epsilon=2\frac{C_s^4}{\mu^2}\delta^2$, leading to
\begin{align*}
    \frac{1}{2}\frac{d}{dt}(\|\bm{ \phi}\|^2+\frac{C_s^4}{\mu^2}\delta^2\|\grad \phi\|^2)&+\nu\|\grad \phi\|^2+C_1(C_s\delta)^2\|\grad \phi\|_{L^3}^3\\
    &\leq \frac{C_s^4}{\mu^2}\delta^2\|\grad \phi\|^2+C(\epsilon)\|\grad w\|_{L^3}^2\| \phi\|^2 ,\\
    &\leq \max\{1,C(\epsilon)\|\grad w\|_{L^3}^2\}(\frac{C_s^4}{\mu^2}\delta^2\|\grad \phi\|^2+\| \phi\|^2).
\end{align*}
Here $a(t):=\max\{1,\ C(\epsilon)\|\grad w_1\|_{L^3}^2\}\in L^1(0,T)$,\ because 
\begin{align*}
\int_0^T 1\cdot \|\grad w_1\|_{L^3}^2 \ dt&\leq \left(\int_0^T 1^3\ dt\right)^{1/3}\cdot \left(\int_0^T\|\grad w_1\|_{L^3}^{2\times \frac{3}{2}}\ dt\right)^{2/3}\\
&=\left(\int_0^T 1^3\ dt\right)^{1/3}\cdot \left(\int_0^T\|\grad w_1\|_{L^3}^{3}\ dt\right)^{2/3}< \infty.
\end{align*}
Then we can form its antiderivative
\begin{align*}
    A(T):=\int_0^T a(t)\ dt<\infty,\ \text{for}\ \grad w\in L^3(0,T;L^3(\Omega)).
\end{align*}
Multiplying through by the integrating factor $e^{-A(t)}$ gives
\begin{align*}
    \frac{d}{dt}\left[\frac{1}{2}e^{-A(t)}\left(\| \phi\|^2+\frac{C_s^4\delta^2}{\mu^2}\|\grad \phi\|^2\right)\right]+e^{-A(t)}[\nu\|\grad \phi\|^2+C_1(C_s\delta)^2\|\grad \phi\|_{L^3}^3] \leq 0.
\end{align*}
Then, integrating over $[0,T]$ and multiplying through by $e^{A(t)}$ gives
\begin{align*}
    \frac{1}{2}\Big(\| \phi(T)\|^2+\frac{C_s^4}{\mu^2}\delta^2\|\grad \phi(T)\|^2\Big)+\int_0^T \Big(\nu\|\grad \phi\|^2+C_1(C_s\delta)^2\|\grad \phi\|_{L^3}^3\Big)\ dt\\\leq e^{A(t)}\frac{1}{2}\Big(\| \phi(0)\|^2+\frac{C_s^4}{\mu^2}\delta^2\|\grad \phi(0)\|^2\Big).
\end{align*}
\end{proof}
\subsection{Modelling error}
In this subsection, we analyze the error between the solution to the NSE \eqref{nse1} and the Corrected Smagorinksy Model \eqref{CSM8} in \cref{thm:error}.
\begin{theorem}\label{thm:error}
Assume $\grad  u_t \in L^2(\Omega)$ and $
\grad w\in L^2(0,T;L^3)$, let $ \phi= u_{NSE}- w_{Smag}$ be the modelling error of the Corrected Smagorinsky, then $ \phi$ satisfies the following:
\begin{gather*}
    \frac{1}{2}\Big(\| \phi(T)\|^2+\frac{C_s^4}{\mu^2}\delta^2\|\grad \phi(T)\|^2\Big)+\int_0^T \frac{\nu}{2}\|\grad \phi\|^2+\frac{C_1}{2}(C_s\delta)^2\|\grad \phi\|_{L^3}^3\ dt \\\leq
    C^*\left\{\frac{1}{2}\Big(\| \phi(0)\|^2+\frac{C_s^4}{\mu^2}\delta^2\|\grad \phi(0)\|^2\Big)+\int_0^T (C_s\delta)^2\|\grad u\|_{L^3}^3+\frac{C_s^4}{\mu^2}\delta^2\|\grad u_t\|^2\ dt\right\}.
\end{gather*}
Here $C^*$ depends on $\nu,\ T,\ \int_0^T \|\grad w\|_{L^3}^2\ dt$. 
\end{theorem}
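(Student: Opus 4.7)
The strategy parallels the uniqueness proof (Theorem 4.3), but now the error equation carries two non-zero \emph{consistency residuals} coming from the fact that $\bu$ solves the NSE rather than the MSM. First I would rewrite the NSE for $\bu$ in MSM form by adding and subtracting $-C_s^4\delta^2\mu^{-2}\Dt\bu_t$ and $-\div\bigl((C_s\delta)^2|\grad\bu|\grad\bu\bigr)$. Subtracting \eqref{msm8} for $\bw$ from this rewritten NSE yields, with $\bm{\phi}=\bu-\bw$ and $\grad\cdot\bm{\phi}=0$,
\begin{equation*}
\bm{\phi}_t-\tfrac{C_s^4\delta^2}{\mu^2}\Dt\bm{\phi}_t+\bu\cdot\grad\bu-\bw\cdot\grad\bw-\nu\Dt\bm{\phi}+\grad(p-q)-\div\bigl((C_s\delta)^2(|\grad\bu|\grad\bu-|\grad\bw|\grad\bw)\bigr)=-\tfrac{C_s^4\delta^2}{\mu^2}\Dt\bu_t-\div\bigl((C_s\delta)^2|\grad\bu|\grad\bu\bigr).
\end{equation*}

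Next I would test with $\bm{\phi}$. On the left, integration by parts produces the energy derivative $\tfrac{1}{2}\tfrac{d}{dt}\bigl(\|\bm{\phi}\|^2+\tfrac{C_s^4\delta^2}{\mu^2}\|\grad\bm{\phi}\|^2\bigr)$ and the viscous term $\nu\|\grad\bm{\phi}\|^2$; the pressure term drops because $\bm{\phi}$ is divergence-free; the trilinear term splits as $\bu\cdot\grad\bm{\phi}+\bm{\phi}\cdot\grad\bw$, and Lemma 2.3 eliminates the first half so only $(\bm{\phi}\cdot\grad\bw,\bm{\phi})$ survives; and Strong Monotonicity (Lemma 2.9) bounds the Smagorinsky difference from below by $C_1(C_s\delta)^2\|\grad\bm{\phi}\|_{L^3}^3$. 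On the right, integration by parts turns the two source terms into $\tfrac{C_s^4\delta^2}{\mu^2}(\grad\bu_t,\grad\bm{\phi})$ and $(C_s\delta)^2(|\grad\bu|\grad\bu,\grad\bm{\phi})$.

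I now bound the three right-hand side terms. For the convective error, Hölder plus the 3D Sobolev interpolation $\|\bm{\phi}\|_{L^3}\leq C\|\bm{\phi}\|^{1/2}\|\grad\bm{\phi}\|^{1/2}$ gives $|(\bm{\phi}\cdot\grad\bw,\bm{\phi})|\leq C\|\grad\bw\|_{L^3}\|\bm{\phi}\|\|\grad\bm{\phi}\|$; Young with $\epsilon=\nu/4$ produces $\tfrac{\nu}{4}\|\grad\bm{\phi}\|^2+\tfrac{C^2}{\nu}\|\grad\bw\|_{L^3}^2\|\bm{\phi}\|^2$. For the time-regularization residual, Cauchy-Schwarz and Young balanced with weight $\sqrt{C_s^4\delta^2/\mu^2}$ give $\tfrac{1}{2}\tfrac{C_s^4\delta^2}{\mu^2}\|\grad\bu_t\|^2+\tfrac{1}{2}\tfrac{C_s^4\delta^2}{\mu^2}\|\grad\bm{\phi}\|^2$; the second piece has exactly the shape of the energy and so absorbs into the Gronwall coefficient rather than into coercivity. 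For the Smagorinsky residual, the identity $\||\grad\bu|\grad\bu\|_{L^{3/2}}=\|\grad\bu\|_{L^3}^2$ followed by Hölder and Young with exponents $3/2,3$ yields $C(\epsilon)(C_s\delta)^2\|\grad\bu\|_{L^3}^3+\epsilon(C_s\delta)^2\|\grad\bm{\phi}\|_{L^3}^3$; I would choose $\epsilon=C_1/2$ so that this last piece is absorbed by the monotonicity term on the LHS.

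Combining the bounds leaves a differential inequality
\begin{equation*}
\tfrac{d}{dt}g(t)+\tfrac{\nu}{2}\|\grad\bm{\phi}\|^2+\tfrac{C_1}{2}(C_s\delta)^2\|\grad\bm{\phi}\|_{L^3}^3\leq 2a(t)g(t)+2S(t),
\end{equation*}
with $g(t):=\|\bm{\phi}\|^2+\tfrac{C_s^4\delta^2}{\mu^2}\|\grad\bm{\phi}\|^2$, $a(t):=\max\{\tfrac{C^2}{\nu}\|\grad\bw\|_{L^3}^2,\tfrac{1}{2}\}$, and $S(t):=\tfrac{1}{2}\tfrac{C_s^4\delta^2}{\mu^2}\|\grad\bu_t\|^2+C(C_s\delta)^2\|\grad\bu\|_{L^3}^3$. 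Because $\grad\bw\in L^2(0,T;L^3)$, the antiderivative $A(T)=\int_0^T 2a(t)\,dt$ is finite and depends only on $\nu,T,\int_0^T\|\grad\bw\|_{L^3}^2\,dt$. Using exactly the integrating-factor argument from Theorem 4.3 (multiply by $e^{-A(t)}$, integrate on $[0,T]$, multiply back by $e^{A(T)}$, and drop the factor $e^{A(T)-A(t)}\geq 1$ in front of the dissipation integral) produces the asserted inequality with $C^*=e^{A(T)}$.

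The main obstacle is the Smagorinsky consistency term: it must be split with Young's inequality of exponents $3/2,3$ so that just enough of the cubic $L^3$ norm stays on the LHS to match the monotonicity estimate, while the remainder produces precisely $(C_s\delta)^2\|\grad\bu\|_{L^3}^3$ in the source. A secondary delicate point is the $\grad\bu_t$ residual: splitting its Young weight asymmetrically (keeping the factor $C_s^4\delta^2/\mu^2$ on both sides rather than trading it for $\nu$) is what yields the model-correct $\delta^2$ (not $\delta^4$) scaling in the final estimate and allows the leftover $\|\grad\bm{\phi}\|^2$ to fold into the Gronwall coefficient instead of corrupting the coercive $\nu$-term.
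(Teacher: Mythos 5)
Your proposal is correct and follows essentially the same route as the paper's proof: the same rewriting of the NSE in MSM form to isolate the two consistency residuals, the same splitting of the convective difference, the same use of Strong Monotonicity on the left and the same three right-hand-side bounds (with the Young exponents $3/2,3$ for the Smagorinsky residual and the $\delta^2$-weighted split for the $\grad\bu_t$ term folding into the Gronwall coefficient), concluded by the identical integrating-factor argument. The only differences are immaterial choices of the Young parameters.
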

\begin{proof}
$ u_{NSE}$ satisfies $\grad\cdot u =0$ and the following equation
\begin{equation}\label{unse}
\begin{aligned}
     u_t+ u\cdot\grad u-\nu\Delta u+\grad p-(C_s\delta)^2\grad\cdot(|\grad u|\grad u)-\frac{C_s^4}{\mu^2}\delta^2\Delta u_t\\
    =f-(C_s\delta)^2\grad\cdot(|\grad u|\grad u)-\frac{C_s^4}{\mu^2}\delta^2\Delta u_t.
\end{aligned}
\end{equation}
Subtract \eqref{CSM8} from \eqref{unse}. We obtain, $\grad \cdot  \phi=0$ and
\begin{gather*}
    { \phi}_t-\frac{C_s^4}{\mu^2}\delta^2\Delta{ \phi}_t+ u\cdot\grad u- w\cdot\grad w-\nu\Delta \phi+ \grad(p-q)\\
   -(C_s\delta)^2\grad\cdot(|\grad u|\grad u-|\grad w|\grad w) =-(C_s\delta)^2\grad\cdot(|\grad u|\grad u)-\frac{C_s^4}{\mu^2}\delta^2\Delta u_t.
\end{gather*}
Here, $ u\cdot\grad u- w\cdot\grad w= u\cdot\grad u- u\cdot \grad w+ u\cdot \grad w- w\cdot\grad w= u\cdot\grad \phi+ \phi\cdot\grad w$.\\
Take $L^2$ inner product with $ \phi$ gives
\begin{gather*}
    \frac{1}{2}\frac{d}{dt}\Big(\| \phi\|^2+\frac{C_s^4}{\mu^2}\delta^2\|\grad \phi\|^2\Big)+\int_\Omega  \phi\cdot\grad w\cdot \phi\  d x+\nu\|\grad \phi\|^2\ d x\\
    +(C_s\delta)^2\int_\Omega (|\grad u|\grad u-|\grad w|\grad w)\cdot\grad( u- w)\ d x\\
    =(C_s\delta)^2\int_\Omega |\grad u|\grad u:\grad \phi\  d x+\frac{C_s^4}{\mu^2}\delta^2\int_\Omega \grad u_t:\grad \phi\  d x.
\end{gather*}
Using Strong Monotonicity \eqref{monotonicity and LLC}, we have
\begin{equation}\label{difference-equality}
\begin{aligned}
    &\frac{1}{2}\frac{d}{dt}\Big(\| \phi\|^2+\frac{C_s^4}{\mu^2}\delta^2\|\grad \phi\|^2\Big)+\nu\|\grad \phi\|^2+C_1(C_s\delta)^2\|\grad \phi\|_{L^3}^3\\
    \leq-\int_\Omega & \phi\cdot\grad w\cdot \phi\ d  x+(C_s\delta)^2\int_\Omega |\grad u|\grad u:\grad \phi\  d x+\frac{C_s^4}{\mu^2}\delta^2\int_\Omega \grad u_t:\grad \phi\  d x.
\end{aligned}
\end{equation}
Consider the first term in the RHS, similar to the previous steps
\begin{align*}
    \Big|-\int_\Omega  \phi\cdot\grad w\cdot \phi\  d x\Big|\leq \frac{\epsilon_1}{2}\|\grad \phi\|^2+C(\epsilon_1)\|\grad w\|_{L^3}^2\| \phi\|^2.
\end{align*}
The second term in the RHS is
\begin{align*}
    \Big|(C_s\delta)^2\int_\Omega |\grad u|\grad u:\grad \phi\  d x\Big|&\leq (C_s\delta)^2\|\grad \phi\|_{L^3}\|\grad u\|_{L^3}^2,\\
    &\leq \frac{\epsilon_2}{3}(C_s\delta)^2\|\grad \phi\|_{L^3}^3+C(\epsilon_2)(C_s\delta)^2(\|\grad u\|_{L^3}^2)^{3/2},\\
    &= \frac{\epsilon_2}{3}(C_s\delta)^2\|\grad \phi\|_{L^3}^3+C(\epsilon_2)(C_s\delta)^2\|\grad u\|_{L^3}^3.
\end{align*}
The third term in the RHS satisfies
\begin{align*}
    \Big|\frac{C_s^4}{\mu^2}\delta^2\int_\Omega \grad u_t:\grad \phi\  d x\Big|&\leq\frac{C_s^4}{\mu^2}\delta^2\|\grad u_t\|\|\grad \phi\|,\\
    &\leq \frac{\epsilon_3}{2}\frac{C_s^4}{\mu^2}\delta^2\|\grad \phi\|^2+C(\epsilon_3)\frac{C_s^4}{\mu^2}\delta^2\|\grad u_t\|^2.
\end{align*}
Pick $\epsilon_1=\nu,\epsilon_2=\frac{3C_1}{2}$, collect all terms, \eqref{difference-equality} becomes
\begin{gather*}
    \frac{1}{2}\frac{d}{dt}\Big(\| \phi\|^2+\frac{C_s^4}{\mu^2}\delta^2\|\grad \phi\|^2\Big)+\frac{\nu}{2}\|\grad \phi\|^2+\frac{C_1}{2}(C_s\delta)^2\|\grad \phi\|_{L^3}^3\\
    \leq\max\left\{C(\epsilon_1)\|\grad w\|_{L^3}^2,\frac{\epsilon_3}{2}\right\}(\| \phi\|^2+\frac{C_s^4}{\mu^2}\delta^2\|\grad \phi\|^2)\\
    +C(\epsilon_2)(C_s\delta)^2\|\grad u\|_{L^3}^3+C(\epsilon_3)\frac{C_s^4}{\mu^2}\delta^2\|\grad u_t\|^2.
\end{gather*}
Denote $a(t):=\max\left\{C(\epsilon_1)\|\grad w\|_{L^3}^2,\ \frac{\epsilon_3}{2}\right\}$ and its antiderivative is given by
\begin{align*}
    A(T):=\int_0^T a(t)\ dt<\infty\ \text{for} 
    \ \grad w\in L^2(0,T;L^3).
\end{align*}
Multiplying through by the integrating factor $e^{-A(t)}$ gives
\begin{gather*}
    \frac{d}{dt}\left[\frac{1}{2}e^{-A(t)}\left(\| \phi\|^2+\frac{C_s^4\delta^2}{\mu^2}\|\grad \phi\|^2\right)\right]+e^{-A(t)}\left[\frac{\nu}{2}\|\grad \phi\|^2+\frac{C_1}{2}(C_s\delta)^2\|\grad \phi\|_{L^3}^3\right] \\
    \leq e^{-A(t)}\left\{C(\epsilon_2)(C_s\delta)^2\|\grad u\|_{L^3}^3+C(\epsilon_3)\frac{C_s^4\delta^2}{\mu^2}\|\grad u_t\|^2\right\}.
\end{gather*}
Then, integrating over $[0,T]$ and multiplying through by $e^{A(t)}$ gives
\begin{gather*}
    \frac{1}{2}\Big(\| \phi(T)\|^2+\frac{C_s^4}{\mu^2}\delta^2\|\grad \phi\|^2\Big)+\int_0^T \frac{\nu}{2}\|\grad \phi\|^2+\frac{C_1}{2}(C_s\delta)^2\|\grad \phi\|_{L^3}^3\ dt \\\leq
    C(\nu,T,\|\grad w\|_{L^3})\Big\{\frac{1}{2}\Big(\| \phi(0)\|^2+\frac{C_s^4}{\mu^2}\delta^2\|\grad \phi(0)\|^2\Big)\\
    +\int_0^T (C_s\delta)^2\|\grad u\|_{L^3}^3+\frac{C_s^4}{\mu^2}\delta^2\|\grad u_t\|^2\ dt\Big\},
\end{gather*}
and $C(\nu,T)$ depends on $\nu,\ T,\ \int_0^T \|\grad w\|_{L^3}^2\ dt$. 
\end{proof}
\section{Numerical error}\label{sec:5}
Consider the semi-discrete approximation of the CSM \eqref{CSM8}  with grad-div stabilization. 
Suppose $ w^h( x,0)$ is approximation of $ w( x,0)$. 
The approximate velocity and pressure are maps
\begin{equation*}
     w^h: [0,T]\to X^h\quad, p^h:(0,T]\to Q^h 
\end{equation*}
\textit{satisfying 
}
\begin{equation}\label{semi-CSM}
\begin{aligned}
    ( w^h_t, v^h) +\frac{C_s^4}{\mu^2}\delta^2 (\grad w^h_t,\grad  v^h) +b^*( w^h, w^h, v^h)+\nu(\grad w^h,\grad  v^h) +\gamma(\grad\cdot w^h,\grad\cdot v^h)\\
    -( p^h,\grad\cdot  v^h)+\Big((C_s\delta)^2|\grad w^h|\grad w^h,\grad  v^h\Big)=(f, v^h)\ \text{for all}\  v^h\in X^h,\\
    (q^h,\grad\cdot w^h)=0\ \text{for all}\ q^h\in Q^h.
\end{aligned}
\end{equation}

In this section, we analyze the error between the strong solution to the CSM \eqref{CSM8} and the semi-discrete solution to \eqref{semi-CSM} in \cref{thm:numerial-error}.
\begin{theorem}\label{thm:numerial-error}
(Numerical error of semi-discrete case) Let $ w$ be the strong solution of the CSM \eqref{CSM8} (in particular $\| w\|\in L^{\infty}(0,T),\ \|\grad w\|_{L^2}\in L^{2}(0,T),\ 
 w\in L^2(0,T;W^{1,3}(\Omega))\cap L^2(0,T;L^6(\Omega))$) and $ w^h$ be a solution to the semi-discrete problem \eqref{semi-CSM}. Let
\begin{equation*}
a(t):=C(\nu)\|\grad w\|_{L^3}^2+\frac{1}{4}\| w\|_{L^6}^2.
\end{equation*}
Then, for $T>0$ the error $ w- w^h$ satisfies 

\begin{gather*}
    \|( w- w^h)(T)\|^2+\frac{C_s^4\delta^2}{\mu^2}\|\grad( w- w^h)(T)\|^2\\+\int_0^T\Big\{ \frac{\nu}{4}\|\grad( w- w^h)\|^2+\gamma\|\grad\cdot(w-w^h)\|^2 +\frac{2}{3}C_1 (C_s\delta)^2\|\grad( w- w^h)\|_{L^3}^3\Big\}\ dt\\
    \leq \exp(\int_0^T a(t) dt)\Big\{ \|( w- w^h)(0)\|^2\\+\frac{C_s^4\delta^2}{\mu^2}\|\grad( w- w^h)(0)\|^2+\inf_{ v^h \in V^h}\|( w- v^h)(T)\|^2 \\
    + \int_0^T\Big[   C(\nu)\inf_{ v^h\in V^h}\left(\| w_t- v^h_t\|_{-1}^2 + (\frac{C_s^4\delta^2}{\mu^2})^2\|\grad( w_t- v^h_t)\|^2 + \|\grad ( w- v^h)\|^2\right)\\
     + C\inf_{ v^h\in V^h}\left((C_s\delta)^2\|\grad( w- v^h)\|_{L^3}^{3/2} + \delta^{-1}\|  w- v^h\|_{L^6}^{3/2}+\gamma\|\grad\cdot(w-v^h)\|^2\right)\\
      +C(\gamma^{-1})\inf_{q^h\in Q^h}\|p-q^h\|^2\Big]  dt  \Big\}.
\end{gather*}
\end{theorem}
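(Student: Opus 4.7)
The proof follows the standard framework for error analysis of finite element methods for nonlinear parabolic problems: derive an error equation, decompose the error through a best-approximation splitting, apply the strong monotonicity and local Lipschitz continuity from \cref{monotonicity and LLC} to the nonlinear dissipation, and close with an integrating-factor Gronwall argument.

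First I would test the continuous weak form \eqref{cont-msm} against any $\bv^h \in V^h \subset X^h$ and subtract the semi-discrete equation \eqref{semi-msm}. Since $\bv^h \in V^h$ is discretely divergence-free, $(q^h, \grad\cdot\bv^h) = 0$ for all $q^h \in Q^h$, so the discrete pressure drops out and the continuous pressure term can be written as $-(p - q^h, \grad\cdot\bv^h)$ for an arbitrary $q^h \in Q^h$. Next, pick any $\tilde{\bv}^h \in V^h$ and split $\bw - \bw^h = (\bw - \tilde{\bv}^h) - (\bw^h - \tilde{\bv}^h) =: \eta - \phi^h$ with $\phi^h \in V^h$. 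Testing the error equation against $\phi^h$ produces, on the left-hand side, the coercive contribution $\frac{1}{2}\frac{d}{dt}(\|\phi^h\|^2 + \frac{C_s^4\delta^2}{\mu^2}\|\grad\phi^h\|^2) + \nu\|\grad\phi^h\|^2$.

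The nonlinear dissipation is handled through the key identity $\grad\phi^h = \grad\eta - \grad(\bw - \bw^h)$: strong monotonicity \eqref{strong-monotonicity} produces the coercive $+C_1(C_s\delta)^2\|\grad(\bw-\bw^h)\|_{L^3}^3$ on the left, while LLC \eqref{LLC} bounds the residual by $C_2 r (C_s\delta)^2\|\grad(\bw-\bw^h)\|_{L^3}\|\grad\eta\|_{L^3}$; Young's inequality with the conjugate pair $(3, 3/2)$ splits this into an absorbable fraction of the cubic term plus a source of the form $(C_s\delta)^2\|\grad\eta\|_{L^3}^{3/2}$, which matches the power appearing in the statement. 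For the convective trilinear difference I would use $b^*(\bw,\bw,\phi^h) - b^*(\bw^h,\bw^h,\phi^h) = b^*(\bw-\bw^h,\bw,\phi^h) + b^*(\bw^h, \bw - \bw^h, \phi^h)$, substitute $\bw - \bw^h = \eta - \phi^h$, and discard $b^*(\bw^h, \phi^h, \phi^h) = 0$ by skew-symmetry (\cref{trilinear1}). The surviving $b^*(\phi^h,\bw,\phi^h)$, $b^*(\eta,\bw,\phi^h)$, $b^*(\bw^h,\eta,\phi^h)$ pieces are bounded through H\"older with $(2,3,6)$-type exponent triples together with the Sobolev embedding \eqref{hardy}; after Young splittings these yield the Gronwall coefficient $a(t) = C(\nu)\|\grad\bw\|_{L^3}^2 + \frac{1}{4}\|\bw\|_{L^6}^2$ and the source $\delta^{-1}\|\bw - \bv^h\|_{L^6}^{3/2}$.

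The remaining linear consistency terms $(\eta_t, \phi^h)$, $\frac{C_s^4\delta^2}{\mu^2}(\grad\eta_t, \grad\phi^h)$, $\nu(\grad\eta, \grad\phi^h)$, and $-(p-q^h, \grad\cdot\phi^h)$ are controlled via the dual-norm pairing $(\eta_t, \phi^h) \leq \|\eta_t\|_{-1}\|\grad\phi^h\|$, Cauchy-Schwarz, and Young's inequality, producing the four $\inf$-type sources $\|\bw_t - \bv^h_t\|_{-1}^2$, $(\frac{C_s^4\delta^2}{\mu^2})^2\|\grad(\bw_t - \bv^h_t)\|^2$, $\|\grad(\bw - \bv^h)\|^2$, and $\|p - q^h\|^2$. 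Collecting everything produces a differential inequality $\frac{d}{dt}E(t) + \frac{\nu}{2}\|\grad\phi^h\|^2 + \frac{2}{3}C_1(C_s\delta)^2\|\grad(\bw-\bw^h)\|_{L^3}^3 \leq a(t) E(t) + F(t)$ with $E(t) := \|\phi^h\|^2 + \frac{C_s^4\delta^2}{\mu^2}\|\grad\phi^h\|^2$; multiplying by the integrating factor $e^{-A(t)}$ (with $A(t) := \int_0^t a(s)\,ds$), integrating on $[0,T]$, multiplying back by $e^{A(T)}$, passing from $\phi^h$ to $\bw - \bw^h$ via triangle inequality, and finally taking infima over $\tilde{\bv}^h \in V^h$ and $q^h \in Q^h$ delivers the stated bound. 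The main obstacle is the careful calibration of the $\epsilon$-constants in the Young splittings so that every bad right-hand-side contribution of the form $\epsilon\|\grad\phi^h\|^2$ or $\epsilon(C_s\delta)^2\|\grad(\bw-\bw^h)\|_{L^3}^3$ is fully absorbed into the coercive $\nu\|\grad\phi^h\|^2$ and $C_1(C_s\delta)^2\|\grad(\bw-\bw^h)\|_{L^3}^3$ terms on the left, leaving precisely the advertised $\nu/2$ and $\frac{2}{3}C_1$; the unusual $3/2$-powers in the nonlinear sources are forced by LLC paired with Young's $(3, 3/2)$ exponents, while the $\delta^{-1}$ scaling on the $L^6$ source arises from balancing the trilinear $\eta$-contribution against the $\frac{C_s^4\delta^2}{\mu^2}\|\grad\phi^h\|^2$ kinetic term on the left.
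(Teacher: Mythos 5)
Your proposal follows essentially the same route as the paper's proof: the same $\bm{\eta}$--$\bm{\phi}^h$ splitting through an arbitrary $\widetilde{\bw}\in V^h$, the same trilinear decomposition discarding $b^*(\bw^h,\bm{\phi}^h,\bm{\phi}^h)=0$, the same SM/LLC treatment of the eddy-viscosity term closed by Young's inequality with exponents $(3,3/2)$, and the same integrating-factor Gronwall argument followed by the triangle inequality and infima. The only cosmetic difference is that the paper applies strong monotonicity to the pair $(\bw^h,\widetilde{\bw})$, obtaining coercivity in $\|\grad\bm{\phi}^h\|_{L^3}^3$ and passing to $\|\grad(\bw-\bw^h)\|_{L^3}^3$ only at the end, whereas you apply it to $(\bw,\bw^h)$ directly; both versions work.
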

\begin{proof}
Consider the variational problem of the CSM \eqref{CSM8}: Find $ w:[0,T]\to X=L^{\infty}(0,T;L^2(\Omega))\cap L^3(0,T;W^{1,3}(\Omega))$ satisfying \eqref{cont-CSM}.
Let $ v^h\in V^h=\{ v^h\in X^h:(\grad\cdot  v^h,q^h)=0\ \forall\ q^h\in Q^h\}$. 
Since $ v\in X$\  \& \ 
$ v^h\in V^h\subset X^h\subset X$,
we restrict $ v= v^h$ in continuous variational problem. Then subtract semi-discrete problem \eqref{semi-CSM} from continuous problem \eqref{cont-CSM}. Let $\bm{e}=error= w- w^h$. This gives, 
\begin{equation}\label{CSM21}
\begin{aligned}
  (\bm{e}_t, v^h)+(C_s^4&\delta^2\mu^{-2}\grad \bm{e}_t,\grad v^h)
  +  b^{*}( w,  w, v^h)-b^{*}( w^h,  w^h, v^h)\\+\nu(\grad \bm{e},\grad v^h) +\gamma(\grad\cdot e,\grad\cdot v^h)
  &+(C_s\delta)^2\int_{\Omega}(|\grad w|\grad w-|\grad w^h|\grad w^h):\grad v^h \ d x\\
  &-(p-p^h,\div  v^h)=0.
\end{aligned}
\end{equation} 
We can write,
\begin{align*}
&b^{*}( w,  w, v^h)-b^{*}( w^h,  w^h, v^h)\\=&b^{*}( w,  w, v^h)-b^{*}( w^h,  w, v^h)+b^{*}( w^h,  w, v^h)-b^{*}( w^h,  w^h, v^h),\\=&b^{*}(\bm{e},  w, v^h)+b^{*}( w^h, \bm{e}, v^h).
\end{align*}
Also,
\begin{gather*}
    \int_{\Omega}(|\grad w|\grad w-|\grad w^h|\grad w^h)\cdot\grad v^h \ d x\\
    =\int_{\Omega}(|\grad w|\grad w-|\grad \widetilde{ w}|\grad \widetilde{ w}+|\grad \widetilde{ w}|\grad \widetilde{ w}-|\grad w^h|\grad w^h)\cdot\grad v^h \ d x.
\end{gather*}
Pick 
$\widetilde{ w}\in V^h$. 
Let $\bm{\eta}= w- \widetilde{ w},\ 
 \phi^h= w^h-\widetilde{ w},\ 
 \phi^h\in V^h$.
This implies $\bm{e}=( w-\widetilde{ w})-( w^h-\widetilde{ w})=\bm{\eta}- \phi^h$.
Then $\eqref{CSM21}$ becomes
\begin{gather*}
  ( \phi_t^h, v^h)+(C_s^4\delta^2\mu^{-2}\grad  \phi_t^h,\grad v^h)+b^{*}(\bm{e},  w, v^h)+b^{*}( w^h, \bm{e}, v^h)\\+\nu(\grad  \phi^h,\grad v^h) +\gamma(\grad\cdot\phi^h,\grad\cdot v^h)\notag \\+(C_s\delta)^2\int_{\Omega}(|\grad w^h|\grad w^h-|\grad\widetilde{ w}|\grad\widetilde{ w}):(\grad v^h) \ d x-(p-p^h,\div  v^h)\notag\\=(\bm{\eta}_t, v^h)+(C_s^4\delta^2\mu^{-2}\grad \bm{\eta}_t,\grad v^h)+\nu(\grad \bm{\eta},\grad v^h) +\gamma(\grad\cdot\eta,\grad\cdot v^h)\\
  +(C_s\delta)^2\int_{\Omega}(|\grad w|\grad w-|\grad\widetilde{ w}|\grad\widetilde{ w}):(\grad v^h) \ d x.
\end{gather*}
Take $ v^h= \phi^h$ and $\lambda^h\in Q^h$ .
Here $b^{*}( w^h, \bm{e}, \phi^h)=b^{*}( w^h, \bm{\eta}- \phi^h, \phi^h)=b^{*}( w^h, \bm{\eta}, \phi^h)$\  since\ $b^{*}( w^h,  \phi^h, \phi^h)=0$.
Using strong monotonocity \eqref{monotonicity and LLC}, we get
\begin{equation*}
(C_s\delta)^2\int_{\Omega}(|\grad w^h|\grad w^h-|\grad\widetilde{ w}|\grad\widetilde{ w}):(\grad \phi^h) \ d x\geq C_1(C_s\delta)^2\|\grad \phi^h\|_{L^3}^3.
\end{equation*}
Using local Lipschitz continuity \eqref{monotonicity and LLC}, we get 
\begin{align*}
&(C_s\delta)^2\int_{\Omega}(|\grad w|\grad w-|\grad\widetilde{ w}|\grad\widetilde{ w}):(\grad \phi^h) \ d x\leq (C_s\delta)^2C_2 r \|\grad\bm{\eta}\|_{L^3}\|\grad \phi^h\|_{L^3},\ \\ &where \ \ r=\max\{\|\grad w\|_{L^3},\ \|\grad\widetilde{ w}\|_{L^3}\}.
\end{align*}
Hence,
\begin{gather*}
    \frac{1}{2}\frac{d}{dt}\left\{\| \phi^h\|^2+\frac{C_s^4\delta^2}{\mu^2}\|\grad \phi^h\|^2\right\}+\nu\|\grad \phi^h\|^2 +\gamma\|\grad\cdot\phi^h\|^2\\+b^{*}(\bm{\eta}- \phi^h,  w, \phi^h)+b^{*}( w^h, \bm{\eta}, \phi^h)+C_1(C_s\delta)^2\|\grad \phi^h\|_{L^3}^3\\\leq (\bm{\eta}_t, \phi^h)+(C_s^4\delta^2\mu^{-2}\grad \bm{\eta}_t,\grad \phi^h)+\nu(\grad \bm{\eta},\grad \phi^h) +\gamma(\grad\cdot\eta,\grad\cdot\phi^h)\\+(C_s\delta)^2C_2 r \|\grad\bm{\eta}\|_{L^3}\|\grad \phi^h\|_{L^3}+(p-\lambda^h,\div \phi^h).
\end{gather*}
We can rewrite it as
\begin{gather*}
    \frac{1}{2}\frac{d}{dt}\left\{ \| \phi^h\|^2+\frac{C_s^4}{\mu^2}\delta^2\|\grad \phi^h\|^2\right\}+\nu\|\grad \phi^h\|^2 +\gamma\|\grad\cdot\phi^h\|^2+C_1(C_s\delta)^2\|\grad \phi^h\|_{L^3}^3\\
    \leq (\bm{\eta}_t, \phi^h)+\frac{C_s^4}{\mu^2}\delta^2(\grad\bm{\eta}_t,\grad \phi^h)+\nu(\grad\bm{\eta},\grad \phi^h)+(p-\lambda^h,\grad\cdot  \phi^h) +\gamma(\grad\cdot\eta,\grad\cdot\phi^h)\\
    +(C_s\delta)^2C_2r\|\grad\bm{\eta}\|_{L^3}\|\grad \phi^h\|_{L^3}-b^*(\bm{\eta}, w, \phi^h)+b^*( \phi^h, w, \phi^h)-b^*( w^h,\bm{\eta}, \phi^h).
\end{gather*}
Next we find the bounds for the terms in the RHS. For the first five terms on the right, use the Cauchy Schwarz 
and Young's inequality, 
\begin{equation*}
    |(\bm{\eta}_t, \phi^h)|\leq \|\bm{\eta}_t\|_{-1}\|\grad \phi^h\|
    \leq \frac{\nu}{2}\|\grad \phi^h\|^2+C(\nu)\|\bm{\eta}_t\|_{-1}^2.
\end{equation*}
\begin{align*}
    \frac{C_s^4}{\mu^2}\delta^2|(\grad\bm{\eta}_t,\grad \phi^h)|&\leq \|\grad \phi^h\|\frac{C_s^4\delta^2}{\mu^2}\|\grad\bm{\eta}_t\|, \\
    &\leq \frac{\nu}{4}\|\grad  \phi^h\|^2+C(\nu)\left(\frac{C_s^4\delta^2}{\mu^2}\right)^2\|\grad\bm{\eta}_t\|^2.
\end{align*}
\begin{equation*}
    \nu|(\grad \bm{\eta},\grad  \phi^h)|\leq \nu\|\grad\bm{\eta}\|\|\grad \phi^h\|
    \leq \frac{\nu}{16}\|\grad \phi^h\|^2+C(\nu)\|\grad\bm{\eta}\|^2.
\end{equation*}
\begin{equation*}
     |(p-\lambda^h,\grad\cdot\phi^h)|\leq \|p-\lambda^h\|\|\grad\cdot  \phi^h\|\leq \frac{\gamma}{4}\|\grad\cdot\phi^h\|^2+C(\gamma^{-1})\|p-\lambda^h\|^2.
\end{equation*}
\begin{equation*}
    |\gamma(\grad\cdot\eta,\grad\cdot\phi^h)|\leq\gamma\|\grad\cdot\eta\|\|\grad\cdot\phi^h\|\leq\frac{\gamma}{4}\|\grad\cdot\phi^h\|^2+C(\gamma)\|\grad\cdot\eta\|^2.
\end{equation*}
For the fifth term on the right, use the H\"{o}lder's inequality,
\begin{align*}
    (C_s\delta)^2C_2r\|\grad\bm{\eta}\|_{L^3}\|\grad \phi^h\|_{L^3}&\leq (C_s\delta)^2\left\{\frac{C_1}{3}\|\grad \phi^h\|_{L^3}^3+\frac{2}{3}C_1^{-1/2}r^{3/2}\|\grad\bm{\eta}\|_{L^3}^{3/2}\right\}.
\end{align*}
Next, for the first and the third nonlinear terms, here we follow the estimates in \cite[p.1007-1008, equations (4.5) and (4.6)]{Layton2002error} and we omit the details.
\begin{align}
    |b^*(\bm{\eta}, w, \phi^h)|
    &\leq \frac{1}{4}\|\grad w\|_{L^3}^2\| \phi^h\|^2+\frac{1}{4}\|\bm{\eta}\|_{L^6}^2
    +\epsilon_1\|\grad \phi^h\|_{L^3}^3 
    +C\epsilon_1^{-1/2}\| w\|^{3/2}\|\bm{\eta}\|_{L^6}^{3/2} . \label{nonlinear_term1}\\
    \nonumber\\
    |b^*( \phi^h, w, \phi^h)|&\leq \|\grad w\|_{L^3}\|\grad \phi^h\|_{L^3}^2,\nonumber\\ 
    &\leq \|\grad w\|_{L^3}(\| \phi^h\|^{1/2}\|\grad \phi^h\|^{1/2})^2, \nonumber\\
    &\leq \frac{\nu}{16}\|\grad \phi^h\|^2+C(\nu)\|\grad w\|_{L^3}^2\| \phi^h\|^2. \nonumber\\
    \nonumber\\
    |b^*( w^h,\bm{\eta}, \phi^h)|
    &\leq \frac{1}{4}\| w\|_{L^6}^2\| \phi^h\|^2+\frac{1}{4}\|\grad\bm{\eta}\|_{L^3}^2+\epsilon_2\|\grad \phi^h\|_{L^3}^3+C\epsilon_2^{-1/2}\| w^h\|^{3/2}\|\bm{\eta}\|_{L^6}^{3/2}. \label{nonlinear_term3}
\end{align}
Setting $\epsilon_1=\epsilon_2=\frac{1}{6}C_1(C_s\delta)^2$ and collecting all the terms gives
\begin{gather*}
    \frac{1}{2}\frac{d}{dt}\Big\{\| \phi^h\|^2+\frac{C_s^4}{\mu^2}\delta^2\|\grad \phi^h\|^2\Big\}+ \frac{\nu}{8}\|\grad \phi^h\|^2+ \frac{\gamma}{2}\|\grad\cdot\phi^h\|^2+\frac{1}{3}C_1(C_s\delta)^2\|\grad \phi^h\|_{L^3}^3 \\
    \leq \Big[C(\nu)\|\grad w\|_{L^3}^2+\frac{1}{4}\|\grad w\|_{L^3}^2+\frac{1}{4}\| w\|_{L^6}^2\Big]\| \phi^h\|^2 \\
    +\Big\{C(\nu)\Big[\|\bm{\eta}_t\|_{-1}^2+(\frac{C_s^4\delta^2}{\mu^2})^2\|\grad\bm{\eta}_t\|^2+\|\grad\bm{\eta}\|^2\Big]\\ +C(\gamma^{-1})\|p-\lambda^h\|^2+C(\gamma)\|\grad\cdot\gamma\|^2
    + (C_s\delta)^2 r^{3/2}\|\grad\bm{\eta}\|_{L^3}^{3/2}
    \\
    +\frac{1}{4}\|\bm{\eta}\|_{L^6}^2+\delta^{-1}\| w\|^{3/2}\|\bm{\eta}\|_{L^6}^{3/2}+\frac{1}{4}\|\grad\bm{\eta}\|_{L^3}^2+\delta^{-1}\| w^h\|^{3/2}\|\bm{\eta}\|_{L^6}^{3/2}\Big\}.
\end{gather*}
Denote $a(t):=C(\nu)\|\grad w\|_{L^3}^2+\frac{1}{4}\|\grad w\|_{L^3}^2+\frac{1}{4}\| w\|_{L^6}^2$ and its antiderivative is 
\begin{equation*}
    A(t):=\int_0^T a(t)\ dt<\infty\ \text{for}\ 
     w\in L^2(0,T;W^{1,3}(\Omega))\cap L^2(0,T;L^6(\Omega)).
\end{equation*}
Multiplying through by the integrating factor $e^{-A(t)}$ gives
\begin{gather*}
    \frac{d}{dt}\left[\frac{1}{2}e^{-A(t)}\left(\| \phi^h\|^2+\frac{C_s^4\delta^2}{\mu^2}\|\grad \phi^h\|^2\right)\right]\\+e^{-A(t)}\left[\frac{\nu}{8}\|\grad \phi^h\|^2+\frac{\gamma}{2}\|\grad\cdot\phi^h\|^2+\frac{1}{3}C_1(C_s\delta)^2\|\grad \phi^h\|_{L^3}^3\right] \\
    \leq e^{-A(t)}\Big\{C(\nu)\Big[\|\bm{\eta}_t\|_{-1}^2+(\frac{C_s^4\delta^2}{\mu^2})^2\|\grad\bm{\eta}_t\|^2+\|\grad\bm{\eta}\|^2\Big]\\+C(\gamma^{-1})\|p-\lambda^h\|^2+C(\gamma)\|\grad\cdot\eta\|^2 
    + (C_s\delta)^2 r^{3/2}\|\grad\bm{\eta}\|_{L^3}^{3/2}+\frac{1}{4}\|\bm{\eta}\|_{L^6}^2\\+\delta^{-1}\| w\|^{3/2}\|\bm{\eta}\|_{L^6}^{3/2}+\frac{1}{4}\|\grad\bm{\eta}\|_{L^3}^2+\delta^{-1}\| w^h\|^{3/2}\|\bm{\eta}\|_{L^6}^{3/2}\Big\}.
\end{gather*}
Integrating over $[0,T]$ and multiplying through by $e^{A(t)}$ gives
\begin{gather*}
    \frac{1}{2}\left\{\| \phi^h(T)\|^2+\frac{C_s^4\delta^2}{\mu^2}\|\grad \phi^h(T)\|^2\right\}\\+\int_0^T\Big( \frac{\nu}{8}\|\grad \phi^h\|^2+\frac{\gamma}{2}\|\grad\cdot\phi^h\|^2+\frac{1}{3}C_1(C_s\delta)^2\|\grad \phi^h\|^3_{L^3}\Big) dt \\
    \leq \exp(\int_0^T a(t) dt)\Big\{\frac{1}{2}\Big(\| \phi^h(0)\|^2+\frac{C_s^4\delta^2}{\mu^2}\|\grad \phi^h(0)\|^2\Big) \\
    +\int_0^T\Big[ C(\nu)\Big(\|\bm{\eta}_t\|_{-1}^2+(\frac{C_s^4\delta^2}{\mu^2})^2\|\grad\bm{\eta}_t\|^2+\|\grad\bm{\eta}\|^2\Big)\\+C(\gamma^{-1})\|p-\lambda^h\|^2+C(\gamma)\|\grad\cdot\eta\|^2 
    + (C_s\delta)^2 r^{3/2}\|\grad\bm{\eta}\|_{L^3}^{3/2}+\frac{1}{4}\|\bm{\eta}\|_{L^6}^2\\+\delta^{-1}\| w\|^{3/2}\|\bm{\eta}\|_{L^6}^{3/2}+\frac{1}{4}\|\grad\bm{\eta}\|_{L^3}^2+\delta^{-1}\| w^h\|^{3/2}\|\bm{\eta}\|_{L^6}^{3/2}\Big] dt\Big\}.
\end{gather*}
Apply the H\"{o}lder's inequality gives
\begin{align*}
    \int_0^Tr^{3/2}\|\grad\bm{\eta}\|_{L^3}^{3/2}\ dt\leq \left(\int_0^T r^3\ dt\right)^{1/2}\|\grad\bm{\eta}\|_{L^3(0,T;L^3)}^{3/2} ,\\
    \int_0^T\| w\|^{3/2}\|\bm{\eta}\|_{L^6}^{3/2}\ dt\leq \| w\|_{L^2(0,T;L^2)}^{3/2}\|\bm{\eta}\|_{L^6(0,T;L^6)}^{3/2} ,\\
    \int_0^T\| w^h\|^{3/2}\|\bm{\eta}\|_{L^6}^{3/2}\ dt\leq\| w^h\|_{L^2(0,T;L^2)}^{3/2}\|\bm{\eta}\|_{L^6(0,T;L^6)}^{3/2} .
\end{align*}
$\| w\|_{L^2(0,T;L^2)}$ and $\| w^h\|_{L^2(0,T;L^2)}$ are bounded by problem data by stability bound. Here $r=\max\{\|\grad w\|_{L^3},\|\grad\widetilde{ w}\|_{L^3}\}$ and $\left(\int_0^T r^3\ dt\right)^{1/2}=\|\grad w\|_{L^3(0,T;L^3)}^{3/2}$ or\\ $\|\grad\widetilde{ w}\|_{L^3(0,T;L^3)}^{3/2}$ also bounded.
Using triangle inequality: $\|\bm{e}\|\leq \| \phi^h\|+\|\bm{\eta}\|$, we obtain the desired result.
\end{proof}
\begin{remark}
If $\tilde{ w}$ is taken to be the Stokes projection, then $\|\grad\bm{\eta}\|^2$ does not occur at the RHS.
\end{remark}
\begin{remark}\label{re:nonlinear}
Considering the nonlinear terms \eqref{nonlinear_term1} and \eqref{nonlinear_term3}, alternatively we have
\begin{align*}
    |b^*(\eta,w,\phi^h)|
    &\leq M\|\grad\eta\|\|\grad w\|\|\grad\phi^h\|\leq\epsilon\|\grad\phi^h\|^2+\frac{1}{4\epsilon}M^2\|\grad w\|^2\|\grad\eta\|^2.
    \\
    |b^*(w^h,\eta,\phi^h)|
    &\leq C\|w^h\|^{1/2}\|\grad w^h\|^{1/2}\|\grad\eta\|\|\grad\phi^h\|,
    \\&\leq\epsilon\|\grad\phi^h\|^2+C(\epsilon^{-1})\|w^h\|\|\grad w^h\|\|\grad\eta\|^2.
\end{align*}
By taking $\epsilon=\nu/32$, we can avoid the term $\delta^{-1}\|\eta\|_{L^6}^{3/2}$ at the RHS but instead we have $\nu^{-1}\|\grad\eta\|^2$.
\end{remark}

\subsection{Time discretization of the Corrected Smagorinsky model}
This subsection presents the unconditionally stable, linearly implicit, full discretization of  \eqref{CSM8}. Let the time-step and other quantities be denoted by 
\begin{align*}
    \text{time-step}=k,\ t_n=nk,\ f_n( x)=f( x,t_n), \\
     w_n^h( x)=\text{approximation to}\  w( x,t_n), \\
    p_n^h( x)=\text{approximation to}\ p( x,t_n).
\end{align*}

We perform the finite element spatial discretization and the first-order Backward Euler scheme for time discretization to get the following full discretization: 
Given $( w_n^h,p_n^h)\in (X^h,Q^h)$, find $( w_{n+1}^h,p_{n+1}^h)\in (X^h,Q^h)$ satisfying

\begin{equation}\label{method}
\begin{aligned}
   \Big( \frac{ w_{n+1}^h- w_n^h}{k}, v^h\Big)+\frac{C_s^4\delta^2}{\mu^2}\Big(&\frac{\grad  w^h_{n+1}-\grad  w^h_n}{k},\grad  v^h\Big)+b^*( w^h_n, w^h_{n+1}, v^h) \\
    +\nu(\grad  w^h_{n+1},\grad  v^h)&+(C_s\delta)^2(|\grad  w^h_n|\grad  w^h_{n+1},\grad  v^h)\\
    +\gamma(\grad\cdot w_{n+1}^h,\grad\cdot v^h)-(p^h_{n+1},\div  v^h)&=(f_{n+1}( x), v^h)\ \forall\  v^h\in X^h, \\
 (\div  w^h_{n+1},q^h)&=0 \ \forall\ q^h\in Q^h  .
\end{aligned}
\end{equation}

This method is semi-implicit. 
We shall prove it is unconditionally stable in \cref{thm:be}.
\begin{theorem}\label{thm:be}
\eqref{method} is unconditionally energy stable. For any $N\geq 1$,
\begin{equation}
\begin{aligned}\label{discrete-energy}
    &\Big(\frac{1}{2}\| w^h_N\|^2+\frac{1}{2}\frac{C_s^4\delta^2}{\mu^2}\|\grad  w^h_N\|^2\Big)
    +\sum_{n=0}^{N-1}\frac{1}{2}\Big(\| w^h_{n+1}- w^h_n\|^2\\
    +\frac{C_s^4\delta^2}{\mu^2}&\|\grad  w^h_{n+1}-\grad  w^h_n\|^2\Big) 
    +k\sum_{n=0}^{N-1}\int_\Omega [\nu+(C_s\delta)^2|\grad  w^h_n|]|\grad  w^h_{n+1}|^2\ d x+\\& +\gamma\|\grad\cdot w_{n+1}^h\|^2
    =\Big(\frac{1}{2}\| w^h_0\|^2+\frac{1}{2}\frac{C_s^4\delta^2}{\mu^2}\|\grad  w^h_0\|^2\Big)+k\sum_{n=0}^{N-1} (f_{n+1}, w^h_{n+1}).
\end{aligned}
\end{equation}
\end{theorem}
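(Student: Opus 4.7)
The plan is to derive the identity \eqref{discrete-energy} directly by an energy-method test. I would set $\bv^h = \bw^h_{n+1}$ in the momentum equation of \eqref{method} and $q^h = p^h_{n+1}$ in the discrete mass equation, then add. The pressure-velocity coupling cancels exactly: $(p^h_{n+1}, \div \bw^h_{n+1}) = 0$ is the second equation in \eqref{method} with $q^h = p^h_{n+1}$. The nonlinear convection term disappears because $b^*(\bw^h_n, \bw^h_{n+1}, \bw^h_{n+1}) = 0$ by the skew-symmetry property of $b^*$ established in Lemma \ref{trilinear1}.

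For the two discrete time-derivative terms I would apply the polarization identity \eqref{polar} in the convenient form $(a-b, a) = \frac{1}{2}\|a\|^2 - \frac{1}{2}\|b\|^2 + \frac{1}{2}\|a-b\|^2$, giving
\begin{equation*}
    \left(\frac{\bw^h_{n+1}-\bw^h_n}{k}, \bw^h_{n+1}\right) = \frac{1}{2k}\left(\|\bw^h_{n+1}\|^2 - \|\bw^h_n\|^2 + \|\bw^h_{n+1}-\bw^h_n\|^2\right),
\end{equation*}
and the analogous identity for the $H^1$-increment term with coefficient $C_s^4\delta^2/\mu^2$. The linear viscous term becomes $\nu\|\grad\bw^h_{n+1}\|^2$, and the eddy-viscosity term yields $(C_s\delta)^2 \int_\Omega |\grad\bw^h_n|\,|\grad\bw^h_{n+1}|^2\,d\bx$. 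Combining the latter two produces exactly $\int_\Omega [\nu + (C_s\delta)^2|\grad\bw^h_n|]\,|\grad\bw^h_{n+1}|^2\,d\bx$, which is pointwise non-negative. This non-negativity is the heart of the unconditional stability claim and is the payoff of the semi-implicit linearization: freezing $|\grad\bw^h|$ at time level $n$ while keeping the other factor $\grad\bw^h_{n+1}$ implicit produces a quadratic, dissipative contribution regardless of the size of $k$.

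The last step is to multiply the resulting identity by $k$ and sum from $n=0$ to $n=N-1$. The temporal differences telescope, $\sum_{n=0}^{N-1}(\|\bw^h_{n+1}\|^2 - \|\bw^h_n\|^2) = \|\bw^h_N\|^2 - \|\bw^h_0\|^2$, and likewise for the gradient norms weighted by $C_s^4\delta^2/\mu^2$, producing exactly \eqref{discrete-energy}. I do not anticipate any genuine obstacle here; the argument is essentially careful bookkeeping. The only subtlety is applying the polarization identity in the form that places the numerical dissipation terms $\frac{1}{2}\|\bw^h_{n+1}-\bw^h_n\|^2$ and $\frac{1}{2}\frac{C_s^4\delta^2}{\mu^2}\|\grad\bw^h_{n+1}-\grad\bw^h_n\|^2$ on the dissipative side with positive signs, so that the discrete identity mirrors its continuous counterpart from \cref{thm:energy}.
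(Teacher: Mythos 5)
Your proposal is correct and follows essentially the same route as the paper's proof: test with $\bv^h=\bw^h_{n+1}$, kill the convective term by skew-symmetry of $b^*$, apply the polarization identity to the two discrete time-difference terms, and telescope the sum. The only cosmetic difference is that you write the polarization identity directly in the form $(a-b,a)=\tfrac12\|a\|^2-\tfrac12\|b\|^2+\tfrac12\|a-b\|^2$, whereas the paper expands $(\bw^h_{n+1},\bw^h_n)$ first; the two are identical in effect.
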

\begin{proof}
 Multiply \eqref{method} by k and take  $ v^h= w^h_{n+1}$. Use Lemma \eqref{trilinear1} to get\\ $b^*( w^h_n, w^h_{n+1}, w^h_{n+1})$ $=0$. Hence,
\begin{gather*}
    \| w^h_{n+1}\|^2-( w^h_{n+1}, w^h_n)+\frac{C_s^4\delta^2}{\mu^2}\|\grad  w^h_{n+1}\|^2-\frac{C_s^4\delta^2}{\mu^2}(\grad  w^h_{n+1},\grad  w^h_n) \\
    +\gamma\|\grad\cdot w_{n+1}^h\|^2 +k \int_{\Omega} [\nu+(C_s\delta)^2|\grad  w^h_n|]|\grad  w^h_{n+1}|^2\ d x= k(f_{n+1}, w^h_{n+1}).
\end{gather*}
For the second and fourth term, apply the polarization identity \eqref{polar},
\begin{gather*}
    ( w^h_{n+1}, w^h_n)=\frac{1}{2}\| w^h_{n+1}\|^2+\frac{1}{2}\| w^h_n\|^2-\frac{1}{2}\| w^h_{n+1}- w^h_n\|^2, \\
    (\grad  w^h_{n+1},\grad  w^h_n)=\frac{1}{2}\|\grad  w^h_{n+1}\|^2+\frac{1}{2}\|\grad  w^h_n\|^2-\frac{1}{2}\|\grad  w^h_{n+1}-\grad  w^h_n\|^2.
\end{gather*}
Collecting terms and summing from $n=0$ to $N-1$, we get the result.
\end{proof} 
\begin{remark}
\eqref{discrete-energy} is an energy equality, we can identify the following quantities:
\begin{enumerate}
    \item Model kinetic energy $=\frac{1}{2}\| w^h_N\|^2+\frac{1}{2}\frac{C_s^4\delta^2}{\mu^2}\|\grad  w^h_N\|^2$.
    \item Eddy viscosity dissipation $=\int_\Omega (C_s\delta)^2|\grad  w^h_n||\grad  w^h_{n+1}|^2\ d x$.
    \item Numerical diffusion $=\frac{1}{2}(\| w^h_{n+1}- w^h_n\|^2+\frac{C_s^4\delta^2}{\mu^2}\|\grad  w^h_{n+1}-\grad  w^h_n\|^2)$. This numerical diffusion arises due to the Backward Euler scheme.
\end{enumerate}
\end{remark}
\begin{remark}
The energy equality \eqref{discrete-energy} can be also written as
\begin{equation*}
\begin{aligned}
    \frac{1}{2k}(\| w^h_{n+1}\|^2-\| w^h_n\|^2)+\frac{1}{2k}\| w^h_{n+1}- w^h_n\|^2+\nu\|\grad  w^h_{n+1}\|^2 +\gamma\|\grad\cdot w_{n+1}^h\|^2 \\
    + \Bigg\{\frac{C_s^4\delta^2}{2k\mu^2}(\|\grad  w^h_{n+1}\|^2-\|\grad  w^h_n\|^2)+\frac{C_s^4\delta^2}{2k\mu^2}\|\grad  w^h_{n+1}-\grad  w^h_n\|^2\\
    +\int_\Omega (C_s\delta)^2|\grad  w^h_n||\grad  w^h_{n+1}|^2\ d x \Bigg\} 
    = (f_{n+1}, w^h_{n+1}).
\end{aligned}
\end{equation*}
Line one and the RHS are from the backward Euler discretization of usual NSE. The bracketed term is a discretized form of model dissipation at $t=t_{n+1}$. Here the term model dissipation in the paper can be positive or negative. When it is positive, it aggregates energy from mean to fluctuations. And when it is negative, energy is being transferred from fluctuations back to mean.
\end{remark}
\begin{remark}
For \eqref{method}, the model dissipation is
\begin{gather*}
    \text{MD}^{n+1}=\frac{C_s^4\delta^2}{2k\mu^2}(\|\grad  w^h_{n+1}\|^2-\|\grad  w^h_n\|^2)+\frac{C_s^4\delta^2}{2k\mu^2}\|\grad  w^h_{n+1}-\grad  w^h_n\|^2\\
    +\int_\Omega (C_s\delta)^2|\grad  w^h_n||\grad  w^h_{n+1}|^2\ d x.
\end{gather*}
\end{remark}

In this Test 8.2, we test use both Backward Euler and Crank-Nicolson to see the difference. 
We perform the finite element spatial discretization and the linearly implicit Crank-Nicolson (also called CNLE-CN with Linear Extrapolation) scheme for time discretization to get the following full discretization: for function w, we denote
\begin{equation*}
     w^h_{n+\frac{1}{2}}=\frac{ w^h_n+ w^h_{n+1}}{2},\quad \Tilde{ w}^h_{n+\frac{1}{2}}=\frac{3 w^h_n- w^h_{n-1}}{2}.
\end{equation*}
Given $( w_n^h,p_n^h)\in (X^h,Q^h)$, find $( w_{n+1}^h,p_{n+1}^h)\in (X^h,Q^h)$ satisfying

\begin{equation}\label{cn-method}
\begin{aligned}
   \Big( \frac{ w_{n+1}^h- w_n^h}{k}, v^h\Big)+\frac{C_s^4\delta^2}{\mu^2}\Big(&\frac{\grad  w^h_{n+1}-\grad  w^h_n}{k},\grad  v^h\Big)+b^*(\Tilde{ w}^h_{n+\frac{1}{2}}, w^h_{n+\frac{1}{2}}, v^h) \\
    +\nu(\grad  w^h_{n+\frac{1}{2}},\grad  v^h)&+(C_s\delta)^2(|\grad \Tilde{ w}^h_{n+\frac{1}{2}}|\grad  w^h_{n+\frac{1}{2}},\grad  v^h)\\
    +\gamma(\grad\cdot w_{n+\frac{1}{2}}^h,\grad\cdot v^h)-(p^h_{n+\frac{1}{2}},\div  v^h)&=(f_{n+\frac{1}{2}}( x), v^h)\ \forall\  v^h\in X^h, \\
 (\div  w^h_{n+\frac{1}{2}},q^h)&=0 \ \forall\ q^h\in Q^h  .
\end{aligned}
\end{equation}
We will prove it is unconditionally stable in \cref{thm:stable-cn}.
\begin{theorem}\label{thm:stable-cn}
\eqref{cn-method} is unconditionally energy stable. For any $N\geq 1$,
\begin{equation}\label{cn-discrete-energy}
\begin{aligned}
    &\left(\frac{1}{2}\| w^h_{N}\|^2+\frac{1}{2}\frac{C_s^4\delta^2}{\mu^2}\|\grad  w^h_N\|^2\right)+k\sum_{n=0}^{N-1}\int_\Omega[\nu+(C_s\delta)^2|\grad\Tilde{ w}^h_{n+\frac{1}{2}}|]|\grad  w^h_{n+\frac{1}{2}}|^2\ d x \\& +\gamma\|\grad\cdot w_{n+\frac{1}{2}}^h\|^2 = \left(\frac{1}{2}\| w^h_0\|^2+\frac{1}{2}\frac{C_s^4\delta^2}{\mu^2}\|\grad  w^h_0\|^2\right)+k\sum_{n=0}^{N-1}\left(f_{n+\frac{1}{2}}, w^h_{n+\frac{1}{2}}\right).
\end{aligned}
\end{equation}
\end{theorem}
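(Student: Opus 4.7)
The plan is to mimic the proof of Theorem \ref{thm:be}, testing with the midpoint velocity and exploiting the algebraic identity $(a-b, \tfrac{1}{2}(a+b)) = \tfrac{1}{2}(\|a\|^2 - \|b\|^2)$ that produces an exact energy equality for Crank--Nicolson--type schemes. First I would take $\bv^h = \bw^h_{n+\frac{1}{2}}$ in \eqref{cn-method} and multiply by the time step $k$. The pressure term vanishes: since $p^h_{n+\frac{1}{2}} \in Q^h$, choosing $q^h = p^h_{n+\frac{1}{2}}$ in the discrete divergence-free constraint of \eqref{cn-method} yields $(p^h_{n+\frac{1}{2}}, \div \bw^h_{n+\frac{1}{2}}) = 0$. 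The trilinear convective term also vanishes, because Lemma \ref{trilinear1} gives $b^*(\tilde{\bw}^h_{n+\frac{1}{2}}, \bw^h_{n+\frac{1}{2}}, \bw^h_{n+\frac{1}{2}}) = 0$; this is precisely why the linear extrapolation $\tilde{\bw}^h_{n+\frac{1}{2}}$ was placed in the first slot.

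Next, for the two time-difference terms I would apply the midpoint identity,
\begin{align*}
\left(\bw^h_{n+1} - \bw^h_n,\ \bw^h_{n+\frac{1}{2}}\right) &= \tfrac{1}{2}\left(\|\bw^h_{n+1}\|^2 - \|\bw^h_n\|^2\right), \\
\frac{C_s^4\delta^2}{\mu^2}\left(\grad \bw^h_{n+1} - \grad \bw^h_n,\ \grad \bw^h_{n+\frac{1}{2}}\right) &= \tfrac{1}{2}\frac{C_s^4\delta^2}{\mu^2}\left(\|\grad \bw^h_{n+1}\|^2 - \|\grad \bw^h_n\|^2\right).
\end{align*}
This is the only structural difference from the Backward Euler proof: unlike in \eqref{discrete-energy}, no residual $\|\bw^h_{n+1} - \bw^h_n\|^2$ terms appear, so the result will be a genuine equality rather than only an inequality. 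The viscous and eddy-viscous terms combine into the single nonnegative dissipation
\begin{equation*}
\int_\Omega \left[\nu + (C_s\delta)^2|\grad \tilde{\bw}^h_{n+\frac{1}{2}}|\right] |\grad \bw^h_{n+\frac{1}{2}}|^2\, d\bx,
\end{equation*}
where the eddy-viscosity coefficient is known data at step $n+\tfrac{1}{2}$ and enters only through its magnitude.

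Finally, I would sum from $n=0$ to $n=N-1$ so that the time differences telescope, yielding \eqref{cn-discrete-energy} exactly. Unconditional stability follows because the eddy viscosity coefficient is nonnegative for any sign or magnitude of $\grad \tilde{\bw}^h_{n+\frac{1}{2}}$, with no CFL-type restriction linking $k$ to $h$. The main subtlety, more algebraic than hard, is keeping track of the asymmetric placement of the midpoint $\bw^h_{n+\frac{1}{2}}$ and the extrapolation $\tilde{\bw}^h_{n+\frac{1}{2}}$ in the nonlinear and eddy viscosity terms; once the correct slot assignment is verified (so that skew-symmetry applies and the eddy viscosity enters as a nonnegative weight on $|\grad \bw^h_{n+\frac{1}{2}}|^2$), the proof is a direct adaptation of \cref{thm:be}.
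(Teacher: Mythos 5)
Your proposal is correct and follows essentially the same route as the paper: test with $\bw^h_{n+\frac{1}{2}}$, kill the trilinear term via Lemma~\ref{trilinear1}, apply the midpoint identity to the two time-difference terms, and telescope the sum. The paper's proof is just a terser version of the same argument (it does not even spell out the pressure cancellation you correctly note), so there is nothing to add.
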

\begin{proof}
Multiply \eqref{cn-method} by k and take $ v^h= w^h_{n+\frac{1}{2}}$. Use Lemma \eqref{trilinear1} to get $b^*(\Tilde{ w}^h_{n+\frac{1}{2}},  w^h_{n+\frac{1}{2}}, w^h_{n+\frac{1}{2}})=0$. Hence,
\begin{align*}
    \frac{1}{2}\| w^h_{n+1}\|^2-\frac{1}{2}\| w^h_n\|^2+\frac{1}{2}\frac{C_s^4\delta^2}{\mu^2}\|\grad  w^h_{n+1}\|^2-\frac{1}{2}\frac{C_s^4\delta^2}{\mu^2}\|\grad  w^h_n\|^2 \\  +\gamma\|\grad\cdot w_{n+\frac{1}{2}}^h\|^2
    +k\int_\Omega[\nu+(C_s\delta)^2|\grad\Tilde{ w}^h_{n+\frac{1}{2}}|]|\grad  w^h_{n+\frac{1}{2}}|^2\ d x=k(f_{n+\frac{1}{2}}, w^h_{n+\frac{1}{2}}).
\end{align*}
Collecting terms and summing from $n=0$ to $N-1$, we get the result.
\end{proof}
\begin{remark}
\eqref{cn-discrete-energy} is an energy equality, we can identify the following quantities:
\begin{enumerate}
    \item Model kinetic energy $=\frac{1}{2}\| w^h_{N}\|^2+\frac{1}{2}\frac{C_s^4\delta^2}{\mu^2}\|\grad  w^h_N\|^2$.
    \item Eddy viscosity dissipation $=\int_\Omega(C_s\delta)^2|\grad\Tilde{ w}^h_{n+\frac{1}{2}}||\grad  w^h_{n+\frac{1}{2}}|^2\ d x$.
    \item No Numerical diffusion.
\end{enumerate}
\end{remark}
\begin{remark}
The energy equality can be also written as
\begin{gather*}
    \frac{1}{2k}(\| w^h_{n+1}\|^2-\| w^h_n\|^2)+\nu\|\grad  w^h_{n+\frac{1}{2}}\|^2  +\gamma\|\grad\cdot w_{n+\frac{1}{2}}^h\|^2
    \\
    +\Bigg\{\frac{C_s^4\delta^2}{2k\mu^2}(\|\grad  w^h_{n+1}\|^2-\|\grad  w^h_n\|^2)+\int_\Omega (C_s\delta)^2|\grad\Tilde{ w}^h_{n+\frac{1}{2}}||\grad  w^h_{n+\frac{1}{2}}|^2\ d x\Bigg\} \\
    =(f_{n+\frac{1}{2}}, w^h_{n+\frac{1}{2}}).
\end{gather*}
Line one and line three are from the CNLE discretization of usual NSE. The bracketed term in the second line is a discretized form of model dissipation at $t=t_{n+1}$.
\end{remark}
\begin{remark}
For \eqref{cn-method}, the model dissipation is
\begin{equation*}
    MD^{n+1}=\frac{C_s^4\delta^2}{2k\mu^2}(\|\grad  w^h_{n+1}\|^2-\|\grad  w^h_n\|^2)+\int_\Omega (C_s\delta)^2|\grad\Tilde{ w}^h_{n+\frac{1}{2}}||\grad  w^h_{n+\frac{1}{2}}|^2\ d x.
\end{equation*}
\end{remark}

\section{Numerical Tests}
In this section, we perform two numerical tests.
In the first test, we show the numerical error and the rate of convergence of the Backward Euler scheme. In the second test, we show among Backward Euler (BE) and Crank-Nicolson with Linear Extrapolation (CNLE), CNLE exhibits intermittent backscatter.
\subsection{A test with exact solution} (Taken from V. DeCaria, W. J. Layton and M. McLaughlin \cite{Victor2017artificial})
The first experiment tests the accuracy of the Corrected Smagorinsky Model \eqref{CSM8} and convergence rate of \eqref{method}. 
The following test has an exact solution for the 2D Navier Stokes problem. \\
Let the domain $\Omega=(-1,1)\times(-1,1)$. The exact solution is as follows:
\begin{align*}
    & u(x,y,t)=\pi\sin t(\sin 2\pi y\sin ^2 \pi x,-\sin 2\pi x\sin^2\pi y).\\
    &p(x,y,t)=\sin t\cos \pi x\sin\pi y.
\end{align*}
This is inserted into the CSM and the body force $f(x,t)$ is calculated. \\
Uniform meshes were used with 270 nodes per side on the boundary and the degrees of freedom for the velocity space is 292681 and for the pressure space is 73441. The mesh is fine enough compared to the time-step so that the main error from time-steps is only considered here. Taylor-Hood elements (P2-P1) were used in this test. We ran the test up to $T=10.$ We take $C_s=0.1, \ \mu=0.4,\ \delta$ is taken to be the shortest edge of all triangles. The norms used in the table heading are defined as follows,
$$\| w\|_{\infty,0}:= \text{ess} \sup_{0<t<T} \| w\|_{L^2(\Og)}\ \text{and}\ \| w\|_{0,0}:=\left(\int_0^T\| w(\cdot, t)\|_{L^2(\Og)}^2\ dt\right)^{1/2}.$$
\begin{table}[H]
    \centering
    \begin{tabular}{||c|c|c|c|c|c|c||}
    \hline
         $\Dt t$ & $\| w- w^h\|_{\infty,0}$ &rate& $\|\grad( w- w^h)\|_{0,0}$ &rate& $\|p-p^h\|_{0,0}$ &rate \\
         \hline
         0.05&3.27068&-&5.25129&-&0.640537&- \\
         \hline
         0.02&0.823036&1.506 &1.59313&1.302 &0.235862&1.091 
 \\
         \hline
         0.01&0.348629&1.239  &0.739145&1.108  &0.108216&1.124 
 \\
         \hline
         0.005&0.169429& 1.041   &0.39714&0.89621&0.0470406&1.202  \\
         \hline
    \end{tabular}
    \caption{Numerical error and temporal convergence rate, $Re=5,000,\ T_{final}=10, \ C_s=0.1, \ \mu=0.4, \ \delta=0.0104757$.}
    \label{tab:test1a-Re5000}
\end{table}
From the \cref{tab:test1a-Re5000}, we see the temporal convergence rate is 1 which is expected from Backward Euler \eqref{method} discretization.
\begin{table}[H]
    \centering
    \begin{tabular}{||c|c|c|c|c|c|c||}
    \hline
         $h=\delta$ & $\| w- w^h\|_{\infty,0}$ &rate& $\|\grad( w- w^h)\|_{0,0}$ &rate& $\|p-p^h\|_{0,0}$ &rate \\
         \hline
         0.08571&57.9769&-&88.1677&-&14.5602&- \\
         \hline
         0.04221&1.41386&5.244  &3.30974&4.635   &0.313994&5.418 
 \\
         \hline
         0.02095&0.407421&1.776   &0.95483&1.774  &0.0562327&2.455
 \\
         \hline
         0.01048&0.169429&  1.266 &0.39714&1.266&0.0470406&0.258    \\
         \hline
    \end{tabular}
    \caption{Numerical error and spatial convergence rate, $Re=5,000, \ T_{final}=10, \ C_s=0.1, \ \mu=0.4, \ \Delta t=0.005$.}
    \label{tab:test1b-Re5000}
\end{table}
Using Taylor-Hood elements, \cref{thm:numerial-error} predicts a convergence rate in space of $O(h^{1.75})$, with a moderate constant,  for $\| w- w^h\|_{\infty,0}$ and $\|\grad( w- w^h)\|_{0,0}$. But with the estimates in \cref{re:nonlinear}, the order of convergence is $O(h^2)$, with a large constant $\frac{1}{\nu}$. In \cref{tab:test1b-Re5000}, third and fifth  column show rates $O(h^{1.78})$ until the error plateaus (last line) at the error in the time discretization (last line in \cref{tab:test1a-Re5000}). There is still some gap between the theoretical convergence rate and the actual convergence rate we get in \cref{tab:test1b-Re5000}.
The behavior of the pressure error for this test problem is unclear as well in \cref{tab:test1b-Re5000}.

\subsection{Test2. Flow between offset cylinder} (Taken from N. Jiang and W. J. Layton \cite{jiang2016ev}).\label{sec:6.2} This flow problem is tested to show the transfer of energy from fluctuations back to means in the turbulent flow using the Corrected Smagorinksy Model \eqref{CSM8}. 

The domain is a disk with a smaller off center obstacle inside. Let $r_1=1,r_2=0.1,c=(c_1,c_2)=(1/2,0),$ then the domain is given by
\beas
\Omega=\{(x,y):x^2+y^2< r_1^2 \;\text{and}\; (x-c_1)^2+(y-c_2)^2> r_2^2\}.
\eeas
The flow is driven by a counterclockwise rotational body force
\beas
f(x,y,t)=(-4y*(1-x^2-y^2),4x*(1-x^2-y^2))^T,
\eeas
with no-slip boundary conditions on both circles. We discretize in space using Taylor-Hood elements. There are 80 mesh points around the outer circle and 60 mesh points around the inner circle. The flow is driven by a counterclockwise force (f=0 on the outer circle). Thus, the flow rotates about the origin and interacts with the immersed circle. 

We start the initial condition by solving the Stokes problem. We compute up to final time $T_{final}=3$. Take $C_s=0.1, \mu=0.3, \delta$ is taken to be the shortest edge of all triangles $\approx 0.0112927$, Re=10,000.
For Backward Euler \eqref{method}, we compute the following quantities:
\begin{align*}
    \text{Model dissipation}\  MD&=\int_\Omega\Big(\frac{C_s^4\delta^2}{\mu^2}\frac{\grad  w^h_{n+1}-\grad  w^h_n}{k}\cdot\grad  w^h_{n+1}\\
    &+ (C_s\delta)^2|\grad  w^h_n||\grad  w^h_{n+1}|^2\Big)\,d x.\\
    \text{Effect of new term from CSM,}\ CSMD&=\int_\Omega \frac{C_s^4\delta^2}{\mu^2}\frac{\grad  w^h_{n+1}-\grad  w^h_n}{k}\cdot\grad  w^h_{n+1}\,d x. \\
    \text{Eddy viscosity dissipation}\ EVD&=\int_\Omega (C_s\delta)^2|\grad  w^h_n||\grad  w^h_{n+1}|^2\,d x. \\
    \text{Viscous dissipation}\ VD&=\nu\|\grad  w^h_{n+1}\|^2.
\end{align*}
For Crank-Nicolson CNLE \eqref{cn-method}, we compute the following quantities:
\begin{align*}
    \text{Model dissipation}\  MD&=\int_\Omega\Big(\frac{C_s^4\delta^2}{\mu^2}\frac{\grad  w^h_{n+1}-\grad  w^h_n}{k}\cdot\grad  w^h_{n+\frac{1}{2}}\\
    &+ (C_s\delta)^2|\grad \Tilde{ w}^h_{n+\frac{1}{2}}||\grad  w^h_{n+\frac{1}{2}}|^2\Big)\,d x. \\
    \text{Effect of new term from CSM,}\ CSMD&=\int_\Omega \frac{C_s^4\delta^2}{\mu^2}\frac{\grad  w^h_{n+1}-\grad  w^h_n}{k}\cdot\grad  w^h_{n+\frac{1}{2}}\,d x. \\
    \text{Eddy viscosity dissipation}\ EVD&=\int_\Omega (C_s\delta)^2|\grad \Tilde{ w}^h_{n+\frac{1}{2}}||\grad  w^h_{n+\frac{1}{2}}|^2\,d x. \\
    \text{Viscous dissipation}\ VD&=\nu\|\grad  w^h_{n+\frac{1}{2}}\|^2.
\end{align*}
\begin{figure}[H]
    \centering
    \includegraphics[width=\textwidth]{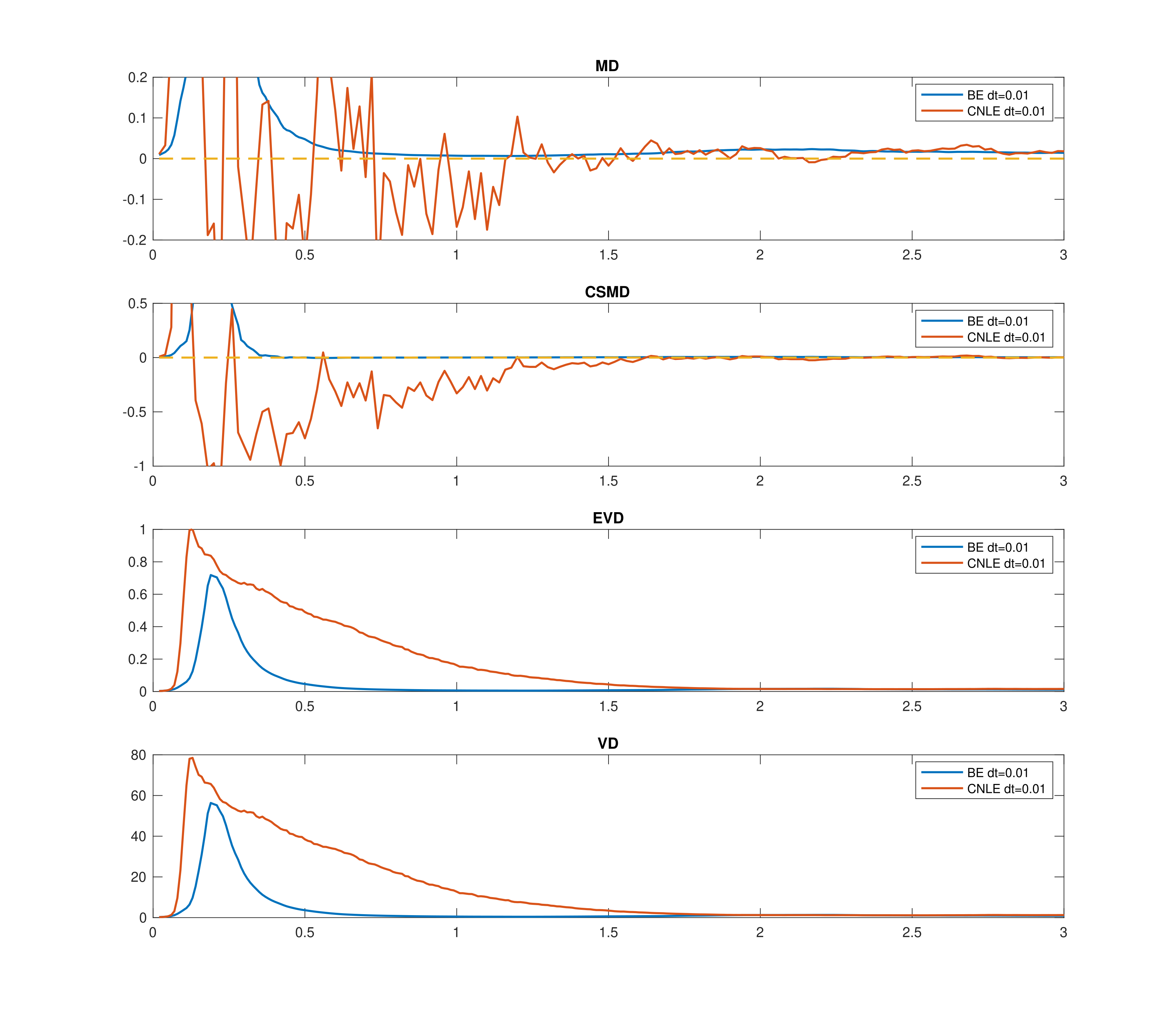}
    \caption{Comparison of Backward Euler \eqref{method} and linearized Crank-Nicolson \eqref{cn-method} with $\Dt t=0.01,\ Re=10,000, \ T_{final}=3, \ C_s=0.1, \ \mu=0.4, \ \delta=0.0112927$.}
    \label{fig:be-cn}
\end{figure}
It can be seen from the \cref{fig:be-cn}, model dissipation MD becomes negative sometimes for linearized Crank-Nicolson \eqref{cn-method} and MD are all positive for Backward Euler \eqref{method}. Only CNLE for the Corrected Smagorinksy has backscatter, which is consistent with the purpose of this model. Backward Euler has too much numerical diffusion, which makes it harder to see the backscatter from BE. 
\begin{figure}[h]
\subfloat[t=1]{\includegraphics[width=6cm]{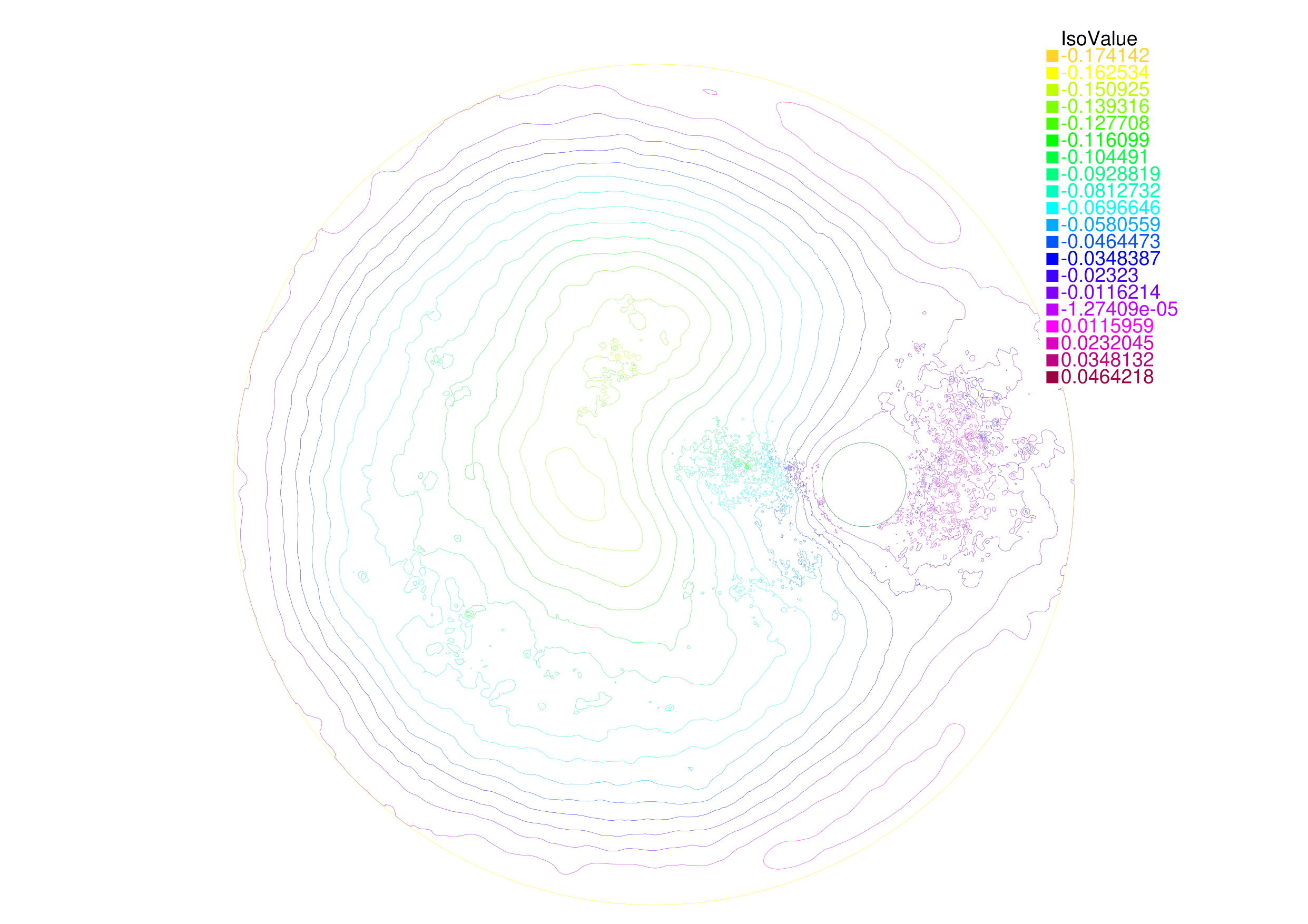}}
\subfloat[t=2]{\includegraphics[width=6cm]{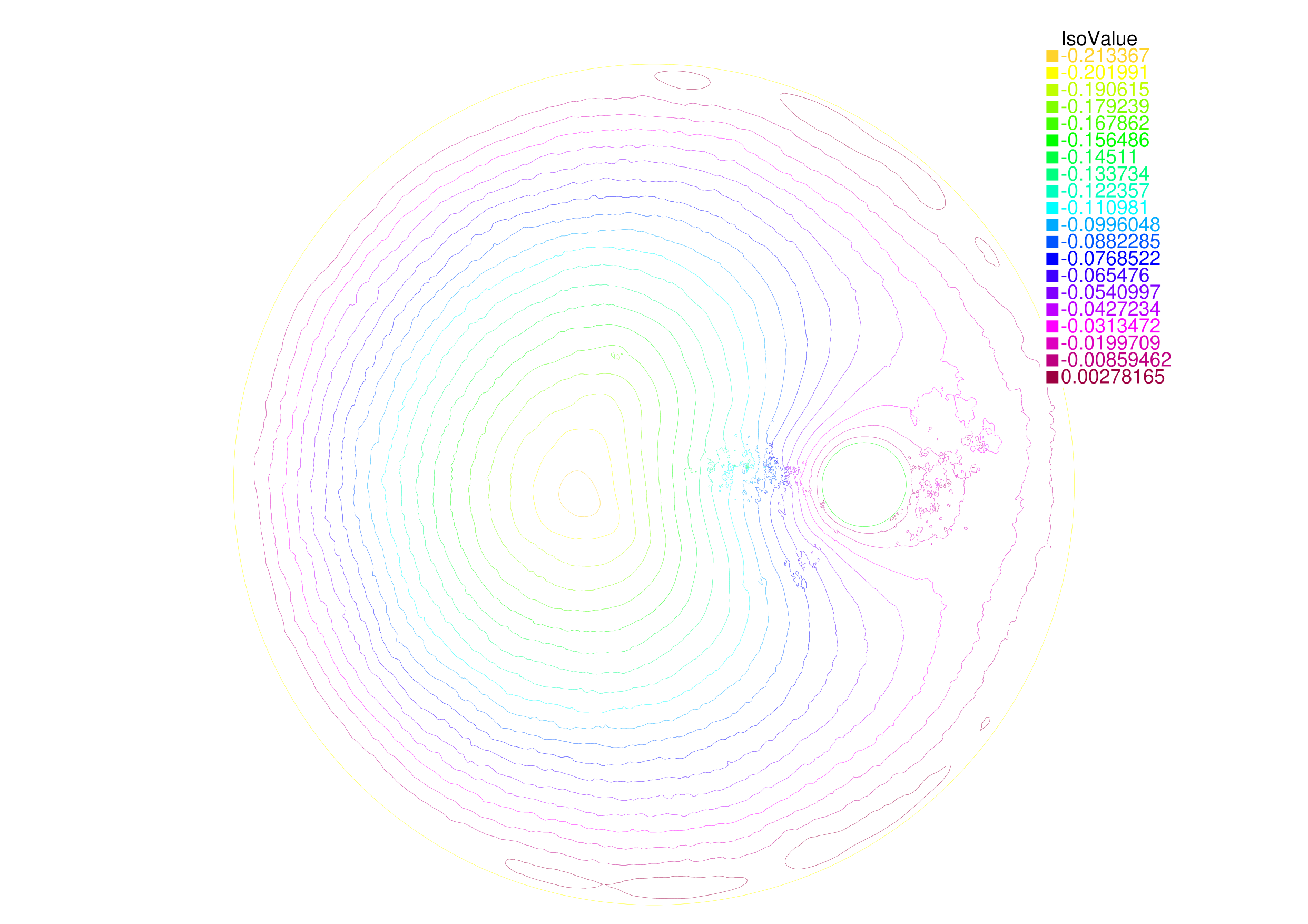}}\\
\subfloat[t=3]{\includegraphics[width=6cm]{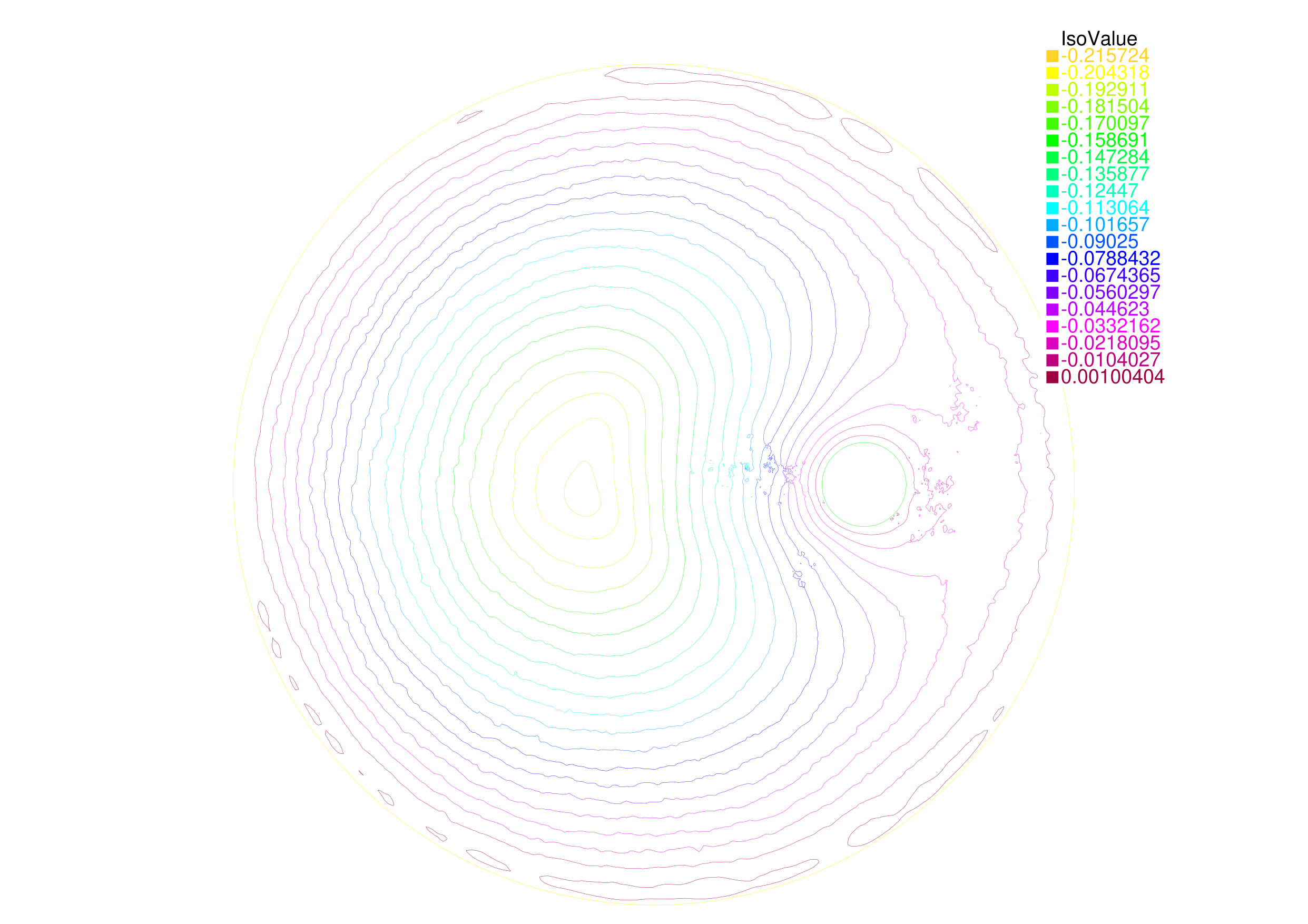}}
\subfloat[t=10]{\includegraphics[width=6cm]{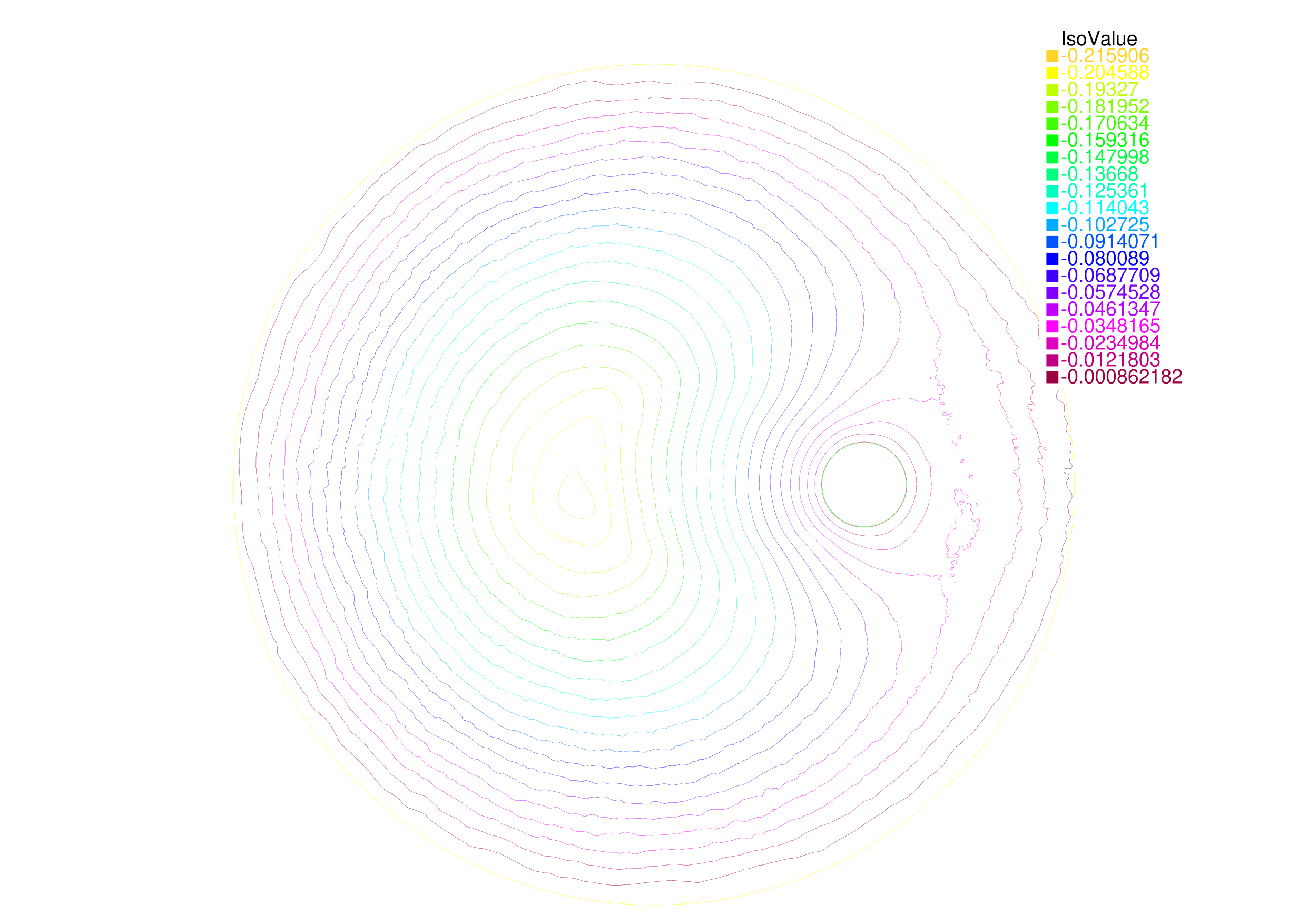}}
\caption{Streamline plot using CNLE \eqref{cn-method}. There are 270 mesh points around the outer circle and 180 mesh points around the inner circle.}
\label{fig:streamline}
\end{figure}

In the \cref{fig:streamline}, we notice the flow becomes smoother as it approaches statistical equilibrium. 

\subsubsection{Comparison with NSE and stardard Smagorinksy}
Here we compare the CSM \eqref{CSM8} with Navier Stokes \eqref{nse1} and the standard Smagorinsky \eqref{smag}. The Taylor microscale \cite{cfdbook} is defined as $\lambda_T:=\|u\|/\|\nabla u\|$,  which represents an average length scale for the flow. We use the same setting but with $Re=100,000$ to compare the Taylor microscale of each model. All numerical tests are calculated using Crank-Nicolson with grad-div stabilization $\gamma=1$.
\begin{figure}[H]
    \centering
    \includegraphics[width=\textwidth]{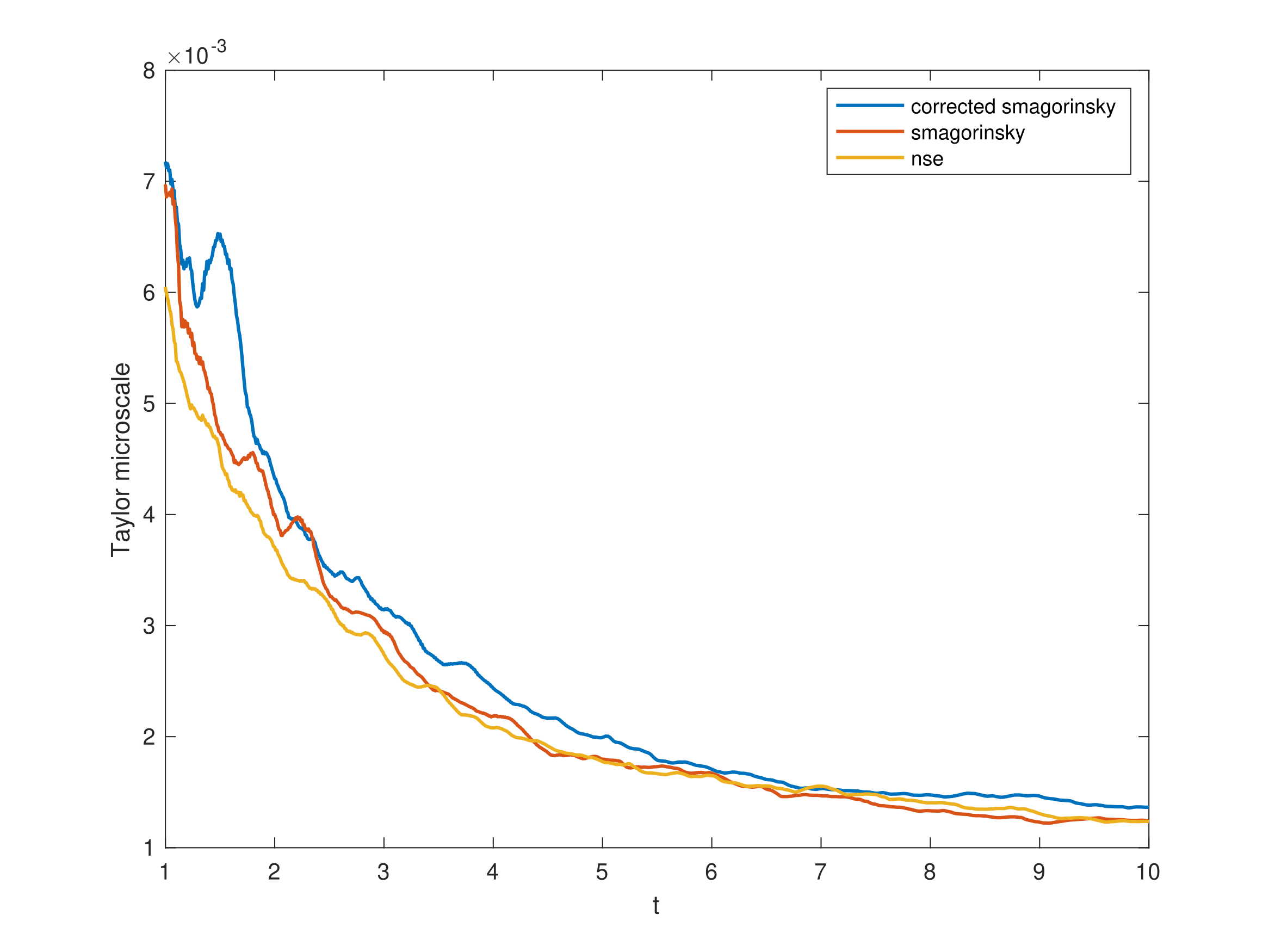}
    \caption{Taylor microscale comparison between CSM, NSE and the stardard Smagorinsky with $\Dt t=0.01,\ Re=100,000,\ T_{final}=10,\ C_s=0.1,\ \mu=0.4,\ \delta=0.0112927$.}
    \label{fig:test2_3compare}
\end{figure}
To further see the difference between these three models, here we focus on time-interval $[7,10]$ and see the relative length-scale $\lambda_T/h$ with $h$ being the meshsize.
\begin{figure}[H]
    \centering
    \includegraphics[width=\textwidth]{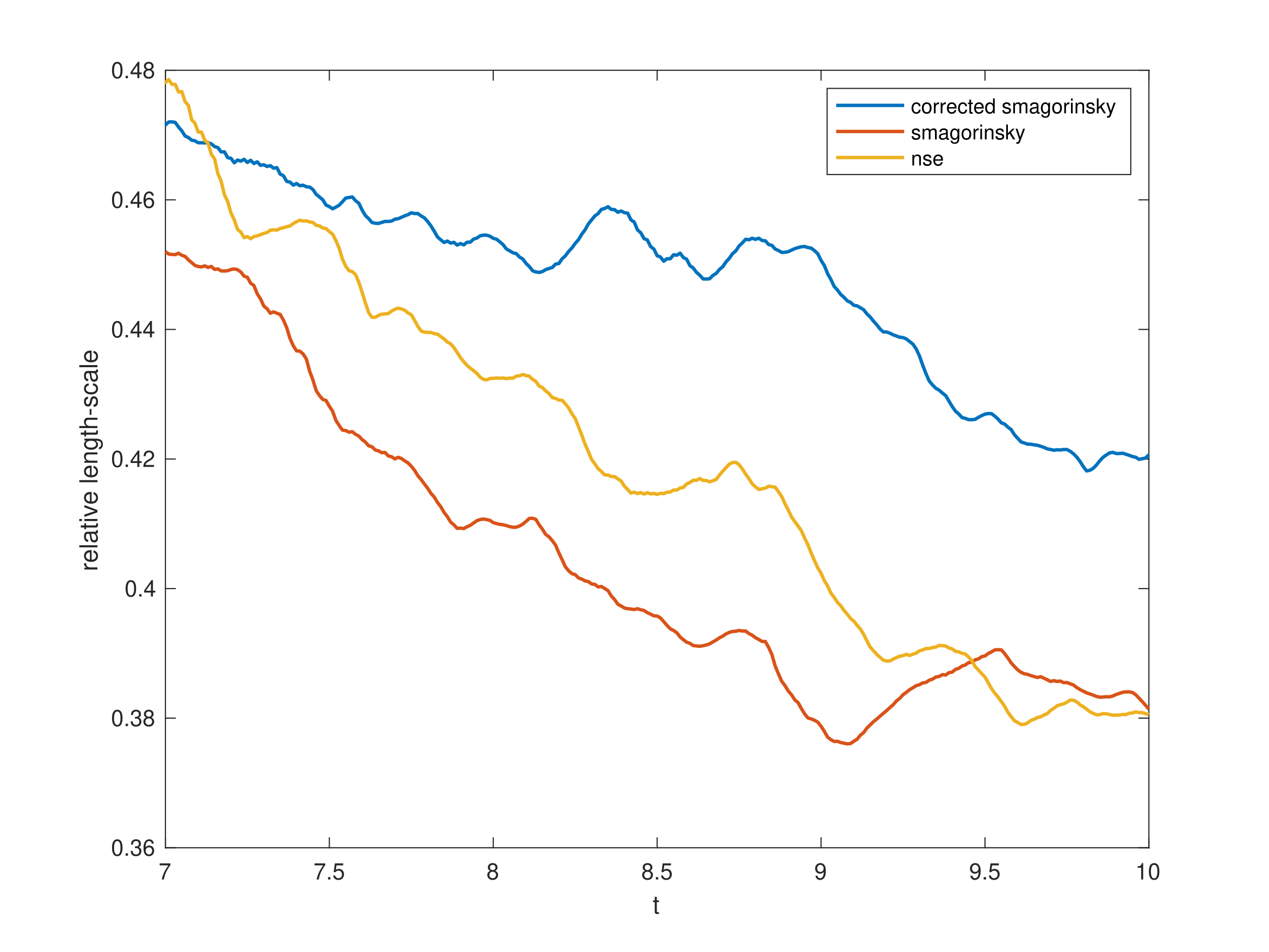}
    \caption{Relative length-scale ($\lambda_T/h$) comparison between CSM, NSE and standard Smagorinsky with $\Dt t=0.01,\  Re=100,000, \ C_s=0.1,\ \mu=0.4,\ \delta=0.0112927$, time-interval shown as $[7,10]$.}
    \label{fig:test2_3compare_zoom}
\end{figure}
From \cref{fig:test2_3compare}, notice the CSM has larger Taylor microscale. Since the CSM models backscatter, more energy is expected in velocity means. Consistent with this, the averaged length-scale of CSM is larger than Smagorinsky and NSE. And from \cref{fig:test2_3compare_zoom}, the relative length-scale of the CSM at the final time is almost twice as large as the relative length-scale calculated with NSE and standard Smagorinsky.

\section{Conclusion and future prospects}
It was demonstrated that the 
Smagor-insky Model could be extended to non-equilibrium turbulence. In addition to that, we were able to show statistical backscatter without using negative turbulent viscosities. The stability of the model, uniqueness of the model's solution, modeling error, and numerical error were analyzed in the paper. Since BE has numerical diffusion while CNLE does not, we can clearly observe backscatter from CNLE in the second numerical test.

In the next step, we can incorporate the penalty method with this model to get the desired result more efficiently. 
\section*{Acknowledgement}
We would like to thank our advisor Professor William J. Layton, for his insightful idea for developing the model and guidance throughout the research. We also humbly acknowledge Dr. Nan Jiang (Assistant Professor, Department of Mathematics, University of Florida) for providing us the FreeFem code of her paper \cite{jiang2016ev}.

\bibliographystyle{siamplain}

\bibliography{references}
\end{document}